\documentclass{CSML}
\pdfoutput=1

\usepackage{hyperref}
\hypersetup{hidelinks}

% LMCS Layouting Macros
\usepackage{lastpage}

\lmcsheading{}{1--\pageref{LastPage}}{}{}%
{Dec.~30, 2016}{Aug.~22, 2017}{}

\usepackage{amsthm}
\usepackage{amsmath}
\usepackage{amssymb}

\def\rd{\triangleright}
\def\red{\triangleright^*}

\def\a{\alpha}
\def\b{\beta}

\def\G{\Gamma}
\def\D{\Delta}

\def\d{\delta}

\def\l{\lambda}

\def\m{\mu}
\def\o{\omega}

\def\s{\sigma}

\def\sou{\underline}

\def\f{\rightarrow}

\def\v{\vdash}

\def\<{\langle}
\def\>{\rangle}
\def\F{\displaystyle\frac}
\def\eq{\simeq}

\newcommand{\cal}[1]{\ensuremath{\mathcal{#1}}}
\def\N{\ifmmode{\rm I\mkern-3.1mu
N\mkern0.5mu}\else{\rm I\kern-.16em
N\hskip0.5pt\ }\fi\relax} 

\def\B{\ifmmode{\rm \bot\mkern-10.1mu
\bot\mkern0.5mu}\else{\rm \bot\kern-.16em
\bot\hskip0.5pt\ }\fi\relax}

\theoremstyle{plain}
\newtheorem{theorem}[thm]{Theorem}
\newtheorem{lemma}[thm]{Lemma}

\newtheorem{corollary}[thm]{Corollary}
\theoremstyle{definition}
\newtheorem{definition}[thm]{Definition}

\begin{document}
\title[A revised completeness result for the simply typed $\lambda \m$-calculus]{A revised completeness result for the simply typed $\lambda \m$-calculus using realizability semantics}
\author{Karim Nour}
\address{LAMA - \'Equipe LIMD, Universit\'e Savoie Mont Blanc, Le Bourget du Lac, France}
\email{karim.nour@univ-smb.fr}
\author{Mohamad Ziadeh}

\begin{abstract}
In this paper, we define a new realizability semantics for the simply 
typed $\l\m$-calculus.  We show that if a term is typable, then it inhabits the interpretation of its type. 
We also prove  a completeness result of our realizability semantics using a particular term model.
This paper corrects some errors in \cite{ns3} by the first author and Saber.
\end{abstract}

\maketitle

\section{Introduction}

Since it was realised that the Curry-Howard isomorphism can be extended to the case of classical logic
(cf. \cite{gri,mur}), several calculi have appeared aiming to give an encoding of proofs formulated
either in classical natural deduction or in classical sequent calculus. One of them was the $\l\m$-calculus presented 
by Parigot in \cite{par0}, which stands very close in nature to the $\l$-calculus itself. 
%The $\l\m$-calculus was originated from the notion of Free Deduction.
%In order to resolve certain determinism in intuitionistic natural deduction, 
It uses new kinds of variables, the $\m$-variables,
not active at the moment, but to which the current continuation can be passed over. Eliminating cuts with these new formulas
leads to the introduction of a new reduction rule: the so-called $\m$-rule. The result is a calculus, the $\l\m$-calculus 
 which is in relation with classical natural deduction. In addition, more simplification rules, for example 
the $\rho$- and $\theta$-rules, are defined by Parigot (cf. \cite{par1, par2}). Parigot showed that the $\l\m$-calculus is strongly 
normalizing in \cite{par3}, he gave a proof of the result with the help of the Tait-Girard reducibility method (cf. \cite{gir, tai}).
An arithmetical proof of the same result was presented by David and Nour in \cite{dn}.

The idea of the realizability semantics is to associate to each type a set
of terms which realize this type. Under this semantics, an atomic type 
is interpreted as a set of $\lambda$-terms saturated by a certain relation.  
Then, an arrow type receives the intuitive interpretation of a
functional space. For example, a term which realizes
the type ${\mathbb N}\rightarrow {\mathbb N}$ is a function from
${\mathbb N}$ to ${\mathbb N}$.  Realizability semantics has been a
powerful method for establishing the strong normalization of type
systems \`a la Tait and Girard (cf. \cite{tai, gir}). The realizability of a type system
enables one to also show the soundness of the system in the sense that
the interpretation of a type contains all the terms that have this
type. Soundness has been an important method for characterizing the
computational behaviour of typed terms through their types as has been
illuminative in the work of Krivine.

It is also interesting to find the class of types for which the
converse of soundness holds i.e., to find the types $A$ for which
the realizability interpretation contains exactly, in a certain sense,
the terms typable by $A$. This property is called completeness and
has not yet been studied for every type system.

Hindley was the first to study the completeness of a simple type system (cf. \cite{hin2, hin3, hin4}) 
and he showed that all the types of that system have the completeness
property.  Then, he generalised in~\cite{hin1} his completeness proof for an
intersection type system. In \cite{lab}, Farkh and Nour established
completeness for a class of types in Girard-Reynolds's system F known as the strictly positive types.
In \cite{fn1, fn2}, the same authors generalised the result of~\cite{lab} for the
larger class which includes all the positive types and also for the second
order functional arithmetic. Coquand established in \cite{coq} 
by a different method using Kripke models the completeness
for the simply typed $\lambda$-calculus. Finally Kamareddine and Nour 
established in \cite{kam} the soundness and completeness for a strict non linear intersection 
type system with a universal type.  

In \cite{ns1}, Nour and Saber adapted Parigot's method and established
a short semantical proof of the strong normalization of classical
natural deduction with disjunction as primitive. In general all the known semantical proofs of strong normalization use
a variant of the reducibility candidates based on a correctness
result, which has been important also for characterizing computational
behavior of some typed terms, as it was done in Krivine's works \cite{kri2}.
This inspired Nour and Saber  to define a general semantics for classical
natural deduction in \cite{ns2} and give such characterizations.
In \cite{ns3}, Nour and Saber proposed a realizability semantics for 
the simply typed $\l\m$-calculus and proved a completeness theorem. This semantics is inspired
by the strong normalization proof of Parigot's $\l\m$-calculus, which consists in 
rewriting each reducibility candidate as a double orthogonal.
In \cite{vbd}, van Bakel, Barbanera and de'Liguoro prove the completeness of the $\l\mu$-calculus with intersection 
types using filter models.

The semantics proposed in \cite{ns3} has several defaults. First of all, there is a mistake in the proof of
the correctness lemma (the case of the typing rule $(\mu)$ in the proof of Lemma 3.3) which is not correctable. 
This comes essentially from the permission to have $\mu$-variables in 
the sequence of terms by adopting Saurin's interpretation. This mistake makes the semantics 
less interesting even though the proof of completeness remains correct. 
Besides, although the statement of Lemma 4.2 of \cite{ns3} is correct, the proposed proof is false. 
The correction of these mistakes, mainly the first, needs the introduction of another realizability 
semantics which is completely different from the one proposed in \cite{ns3}.

In the present work we provide another realizability semantics for the simply typed $\l\m$-calculus, by 
changing the concept of saturation. For this, we add an important and an indispensable
modification to the notion of $\m$-saturation using more bottom sets. The saturation conditions
give a very satisfactory correctness result. The completeness model gives also more intuition about the 
models that we consider.

This paper is organized as follows. Section 2 is an introduction to the
simply typed $\l\m$-calculus. We also define  the semantics and
prove its correctness. Section 3  is devoted to the
completeness result.

\section {The simply typed $\l \m$-calculus}

\subsection{The syntax of the system}

In this paper, we use the $\l \m$-calculus \`a la de Groote which is more
expressive than Parigot's original version.
In \cite{dg}, de Groote has proposed a new version of the $\l \m$-calculus
by modifying its syntax. Namely, in the construction of terms the distinction between named and
unnamed terms has disappeared and the term forming rules has became more flexible:
a $\mu$-operator can be followed now by any kind of term (in the untyped version), not
necessarily by a term beginning with a $\mu$-variable. 

\begin{definition}[Terms and reductions]\leavevmode
\begin{enumerate}
\item Let $\mathcal{V}$ and $\mathcal{A}$ be two infinite sets of
disjoint alphabets for distinguishing $\l$-variables and 
$\mu$-variables. The $\l \m $-terms are given by the following
grammar:

\begin{center}
$ \mathcal{T}$$ \;\; := $$\;\; \mathcal{V}$$ \;\;  | \;\; \lambda$$\mathcal{V}$$.\mathcal{T}\;\;
| $$\;\; (\mathcal{T}\;\mathcal{T})\;\; |$$ \;\;\mu \mathcal{A}.\mathcal{T}$$ \;\;|\;\; (\mathcal{A}\;\mathcal{T})$
\end{center}

\item The complexity of a term is defined by
 $c(x) = 0$, $c(\l x.t) = c(\m \a.t) = c((\a \; t)) = c(t) +1$ and $c((u \; v)) = c(u) + c(v) + 1$.

\item The basic reduction rules are $\triangleright_\beta$ and $\triangleright_\m$.
\begin{itemize}
\item $(\lambda x.u \; v) \triangleright_\beta u[x:=v]$
\item $(\m \a.u\; v) \triangleright_\m \m \a.u[\a:=^*v]$

where  $u[\a:=^*v]$ is obtained from $u$ by replacing inductively each
subterm in the form $(\a\;w)$ in  $u$ by  $(\a\;(w\;v))$.
\end{itemize}

\item Let $\rd$ stand for the compatible closure of the union of the rules given above, and, as usual, 
$\red$ (resp. $\rd^+$) denotes the reflexive transitive (resp. transitive) closure
of $\rd$, and $\eq$ the equivalence relation induced by $\red$. We also
denote by $t \rd^n t'$ that $t \red t'$ and the length of this reduction 
(number of reduced redexes) is $n$. A term is said to be normal if it does not contain a redex.
A term is called normalizable if there is somme reduction sequence which terminates.
A term $t$ is called strongly normalizable, if there is no infinite
reduction sequence starting from $t$.
\end{enumerate}
\end{definition}

We find in the current  literature of the $\l \m$-calculus other simplification rules such as: 
$\rd_{\theta}$, $\rd_{\rho}$, $\rd_{\nu}$, $\rd_\eta$, $\rd_{\m'}, \ldots$
These rules allow to get less normal forms (see \cite{nou1, par1, par3, py}).
In this paper, we do not need these rules for our completeness result.

\begin{definition}[Types and typing rules]\leavevmode
\begin{enumerate}
\item Types are formulas of the propositional logic built from the
infinite set of propositional variables $\mathcal{P} = \{X, Y,Z,\dots  \}$
and a constant of type $\perp$, using the connective $\to$. 

\item The complexity of a type is defined by
%\begin{itemize}
%\item 
$c(\perp) = c(X) = 0$ and
%\item 
$c(A \f B) = c(A) + c(B) +1$.
%\end{itemize}

\item Let $A_1,A_2,\dots  ,A_n,A$ be types, we denote the type $A_1 \to
(\dots  \to (A_n \to A)\dots  )$ by $A_1,\dots  , A_n\to A$.

\item Proofs are presented in  natural deduction system with several
conclusions, such that formulas in the left-hand-side of $\vdash$ are indexed by
$\l$-variables and those in right-hand-side of $\vdash$ are indexed by
$\m$-variables, except one which is indexed by a term.
Let $t$ be a $\l \m $-term, $A$ a type,
$\Gamma=\{x_i:A_i\}_{1 \le i \le n}$ and $\Delta =\{\a_j:B_j\}_{1 \le j
\le m}$ contexts, using the following rules, we will define ``$t$ typed with
type $ A$ in the contexts $\Gamma$ and $ \Delta$'' and we denote it $
\Gamma \vdash t:A\; ; \Delta.$

\begin{center}
 $\F{}{\Gamma\, \vdash x_i:A_i\,\, ; \, \Delta}{ax\;\;}\;\;\;$ $1 \leq i \leq n$
\end{center}

\begin{center}
$\F{\Gamma, x:A \vdash t:B;\Delta}{\Gamma \vdash \lambda x.t:A \to B;\Delta}\;\;{\to_i}
\;\;\;\;\;\;\;\;\;\;\;\;\;\;\;\;\;\;
\F{\Gamma \vdash u:A \to B;\Delta \quad \Gamma \vdash
v:A;\Delta}{\Gamma\vdash (u\; v):B;\Delta}\;\;{\to_e}$
\end{center}

\begin{center}
$\F{\Gamma\vdash t:\perp; \Delta, \a:A}{\Gamma \vdash\m \a. t:A; \Delta}\;\;{\m}
\;\;\;\;\;\;\;\;\;\;\;\;\;\;\;\;\;\;\;\;\;\;\;\;\;\;\;\;\;\;
\quad \F{\Gamma\vdash t:A;\Delta, \a:A}{\G \v (\a\; t):\perp; \D, \a:A}\;\;{\perp}$
\end{center}

We denote this typed system by $S_\m$.

%\item The complexity of a typing is defined as the number of typing rules used in this typing.

\end{enumerate}
\end{definition}

We have the following results (for more details, see \cite{dn, par3, py}).

\begin{theorem}[Confluence result]
If  $t \red t_1$ and  $t \red t_2$, then there exists  $t_3$ such that  $t_1 \red t_3$ and  $t_2 \red t_3$.
\end{theorem}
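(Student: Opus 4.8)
The plan is to use the Tait--Martin-L\"of method of parallel reductions, suitably adapted to the structural substitution appearing in the $\m$-rule. First I would introduce a parallel reduction relation $\rightrightarrows$ on $\l\m$-terms, which contracts a chosen set of $\b$- and $\m$-redexes simultaneously; it is generated by the clauses: $x \rightrightarrows x$; if $u \rightrightarrows u'$ then $\l x.u \rightrightarrows \l x.u'$, $\m\a.u \rightrightarrows \m\a.u'$ and $(\a\,u)\rightrightarrows(\a\,u')$; if $u\rightrightarrows u'$ and $v\rightrightarrows v'$ then $(u\,v)\rightrightarrows(u'\,v')$, $(\l x.u\;v)\rightrightarrows u'[x:=v']$ and $(\m\a.u\;v)\rightrightarrows\m\a.u'[\a:=^*v']$. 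One checks the standard inclusions $\rd\subseteq\rightrightarrows\subseteq\red$, so that $\red$ is the reflexive--transitive closure of $\rightrightarrows$; hence it suffices to prove that $\rightrightarrows$ enjoys the diamond property, after which confluence of $\red$ follows by the usual tiling argument.

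The diamond property is obtained via complete developments: to each term $t$ (together with the set of redexes to be contracted) one associates the term $t^{*}$ resulting from contracting all of them at once, and proves that $t\rightrightarrows s$ implies $s\rightrightarrows t^{*}$. Then from $t\rightrightarrows t_1$ and $t\rightrightarrows t_2$ we get $t_1\rightrightarrows t^{*}$ and $t_2\rightrightarrows t^{*}$, which is the required common reduct. The proof of $s\rightrightarrows t^{*}$ proceeds by induction on the structure of $t$; the only non-routine cases are the two redex clauses, where a root contraction must be reconciled with parallel reductions taking place inside the subterms. For these one needs the substitution lemmas: if $u\rightrightarrows u'$ and $v\rightrightarrows v'$ then $u[x:=v]\rightrightarrows u'[x:=v']$ and $u[\a:=^*v]\rightrightarrows u'[\a:=^*v']$, together with the commutation identities relating ordinary substitution, the structural substitution $[\a:=^*-]$, and iterated structural substitutions.

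The main obstacle is precisely the structural substitution $u[\a:=^*v]$: unlike an ordinary first-order substitution it is defined by recursion on $u$ and copies the argument $v$ into every named subterm $(\a\,w)$, turning it into $(\a\,(w\,v))$. Consequently, establishing that $\rightrightarrows$ is stable under $[\a:=^*-]$, and that nested occurrences of this operation commute in the way needed to close the $\m/\m$ and $\m/\b$ critical pairs, is where essentially all the work lies; once these lemmas are available, the diamond property and hence confluence are routine. Since the statement is quoted here only as a known result, one may alternatively appeal directly to the confluence proofs already in the literature (cf. \cite{dn, par3, py}).
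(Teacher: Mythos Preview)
Your proposal is technically sound: the Tait--Martin-L\"of parallel-reduction method with complete developments is indeed the standard route to confluence for $\l\m$-calculus, and the substitution lemmas you isolate (compatibility of $\rightrightarrows$ with both $[x:=v]$ and the structural $[\a:=^*v]$, plus the commutation identities between them) are exactly the nontrivial ingredients. Your last sentence is the key observation, though: the paper does not prove this theorem at all. It is listed among the background results with the remark ``for more details, see \cite{dn, par3, py}'', and in particular the dedicated reference is Py's thesis \cite{py}. So the paper's ``proof'' is simply a citation, whereas you have supplied an actual (correct) proof outline. If your aim is to match the paper, a one-line appeal to \cite{py} suffices; if your aim is a self-contained argument, what you wrote is the right skeleton, and the real work is precisely where you say it is, in the interaction of $[\a:=^*-]$ with itself and with ordinary substitution.
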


\begin{theorem}[Subject reduction]
If  $\G \vdash t : A ; \Delta$ and  $t \red t'$ then $\G \vdash t' : A ; \Delta$.
\end{theorem}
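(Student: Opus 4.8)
The plan is to reduce the statement to the single-step case and then to two substitution lemmas, one for each basic reduction rule.

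First I would observe that, by induction on the length of the reduction $t \red t'$, it suffices to treat one step $t \rd t'$; and since $\rd$ is the compatible closure of $\rd_\b \cup \rd_\m$, a routine induction on the structure of $t$ reduces the problem to the case where the contracted redex is at the root, i.e.\ to the two rules $(\l x.u\;v) \rd_\b u[x:=v]$ and $(\m\a.u\;v) \rd_\m \m\a.u[\a:=^*v]$. Two auxiliary facts are used repeatedly: a weakening property --- if $\G \v t:A\,;\D$ then $\G,\G' \v t:A\,;\D,\D'$ whenever $\G'$, $\D'$ introduce only fresh variables, immediate by induction on the typing derivation --- and the freedom to rename bound $\l$- and $\m$-variables (the usual $\a$-conversion convention), which keeps bound variables out of the term being substituted.

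The $\b$-case rests on the substitution lemma: if $\G, x:A \v u:B\,;\D$ and $\G \v v:A\,;\D$, then $\G \v u[x:=v]:B\,;\D$. This is proved by induction on $u$ (equivalently on the derivation of $\G,x:A\v u:B\,;\D$); the only non-mechanical points are the axiom case $u=x$, where $u[x:=v]=v$ and the second hypothesis is exactly what is required, and the binding cases $\l y.u'$ and $\m\g.u'$, where one uses $\a$-conversion and weakening. Given an occurrence $(\l x.u\;v)$ typed with $B$, inverting the $(\to_e)$ rule and then the $(\to_i)$ rule feeding its left premise yields $\G,x:A\v u:B\,;\D$ and $\G\v v:A\,;\D$, so the lemma applies and $\G\v u[x:=v]:B\,;\D$.

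The $\m$-case is the delicate one and needs a dedicated lemma for the $\m$-substitution: if $\G \v u:C\,;\D,\a:A\f B$ and $\G \v v:A\,;\D$ (with $\a$ not free in $v$, ensured by renaming), then $\G\v u[\a:=^*v]:C\,;\D,\a:B$. This goes by induction on $u$; the crucial clause is $u=(\a\;w)$, where the $(\perp)$ rule forces $C=\perp$ and $\G\v w:A\f B\,;\D,\a:A\f B$. Applying the induction hypothesis to $w$ gives $\G\v w[\a:=^*v]:A\f B\,;\D,\a:B$; weakening gives $\G\v v:A\,;\D,\a:B$; the $(\to_e)$ rule then types $(w[\a:=^*v]\;v):B$ in $\D,\a:B$, and the $(\perp)$ rule types $(\a\;(w[\a:=^*v]\;v)):\perp$ in $\D,\a:B$, which is exactly $(\a\;w)[\a:=^*v]$. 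Every other clause simply propagates the substitution through the term constructor, using $\a$-conversion when the constructor is a $\m$-abstraction that would otherwise rebind $\a$. To conclude: if $(\m\a.u\;v)$ has type $B$ in $\G\,;\D$, inverting $(\to_e)$ and then the $(\m)$ rule gives $\G\v u:\perp\,;\D,\a:A\f B$ and $\G\v v:A\,;\D$; the lemma yields $\G\v u[\a:=^*v]:\perp\,;\D,\a:B$, and one application of $(\m)$ gives $\G\v\m\a.u[\a:=^*v]:B\,;\D$.

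The main obstacle I expect is the $\m$-substitution lemma: keeping the bookkeeping of the right-hand context correct (the declared type of $\a$ changes from $A\f B$ to $B$ as one descends into $u$, and $v$ must be re-typed under the new context via weakening) and verifying that the inductive definition of $u[\a:=^*v]$ meshes with every term former, including nested $\m$-abstractions potentially rebinding $\a$, which the renaming convention dispatches. Everything else is standard inversion of the typing rules plus weakening.
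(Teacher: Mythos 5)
Your proof is correct. The paper itself gives no proof of this theorem --- it simply cites \cite{dn, par3, py} --- and your argument (reduction to a single root-step, the usual substitution lemma for the $\beta$-case, and the structural substitution lemma $\Gamma \vdash u:C\,;\Delta,\alpha:A\to B$ and $\Gamma\vdash v:A\,;\Delta$ imply $\Gamma\vdash u[\alpha:=^*v]:C\,;\Delta,\alpha:B$ for the $\mu$-case, with the key clause $u=(\alpha\,w)$ handled exactly as you do) is precisely the standard proof found in those references.
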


\begin{theorem}[Strong normalization]\label{SN}
If  $\G \vdash t : A ; \Delta$, then $t$ is strongly  normalizable.
\end{theorem}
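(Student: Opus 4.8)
The plan is to prove this by the Tait--Girard reducibility candidates method, adapted to the $\l\m$-calculus as in Parigot \cite{par3}; an alternative, purely arithmetical argument is the one of David and Nour \cite{dn}. Write $\mathcal{SN}$ for the set of strongly normalizable $\l\m$-terms. First I would fix the class of \emph{reducibility candidates}: sets $\mathcal{R}$ of terms with $\mathcal{R}\subseteq\mathcal{SN}$, closed under $\rd$, containing every ``neutral'' term $x\,t_1\cdots t_n$ and $(\a\;(x\,t_1\cdots t_n))$ with all $t_i\in\mathcal{SN}$, and closed under the $\b$- and $\m$-expansions created by the redexes they contain (the usual CR1--CR3, phrased so that $\rd_\m$ is absorbed too). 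The stability fact that makes this work is: if $u[x:=v]\,t_1\cdots t_n\in\mathcal{R}$ and $v\in\mathcal{SN}$ then $(\l x.u\;v)\,t_1\cdots t_n\in\mathcal{R}$, and analogously for the $\m$-redex using $u[\a:=^*v]$.

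I would then interpret each type $A$ by a candidate $|A|$, defined by induction: $|\perp| = |X| = \mathcal{SN}$ for atoms, and $|A\f B| = \{\,t\mid \forall v\in|A|,\ (t\;v)\in|B|\,\}$, checking routinely that $|A\f B|$ is again a candidate. The core is the adequacy lemma: whenever $\G=\{x_i:A_i\}\v t:A\,;\,\D=\{\a_j:B_j\}$, if one is given terms $v_i\in|A_i|$ and, for each $\m$-variable $\a_j$, a suitable set of argument sequences built from the $|B_j|$'s, then the term obtained from $t$ by the substitution $[x_i:=v_i]$ together with the corresponding iterated replacements $[\a_j:=^*\cdots]$ lies in $|A|$. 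This is proved by induction on the derivation, one case per rule of $S_\m$; the cases $ax$, $\f_i$, $\f_e$ are the standard $\l$-calculus ones.

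The delicate point --- and the real obstacle --- is the pair of rules $(\m)$ and $(\perp)$: one must control what the iterated substitution $[\a:=^*v]$ does to the subterms $(\a\;w)$, and show that applying an argument of $|A|$ to $\m\a.t$ (typed $A\f B$ because $t:\perp$ with $\a:A\f B$ available) stays inside the interpretation. This is what forces the interpretation of a $\m$-variable to be a set of stacks rather than a single term, and it is precisely the spot where a naive formulation collapses --- the error in \cite{ns3} that this paper exists to repair. Once adequacy holds, strong normalization follows at once: instantiate $v_i:=x_i$ and, for each $\a_j$, the empty stack (these lie in the relevant candidates since $\l$-variables are neutral), so $t=t[\cdots]\in|A|\subseteq\mathcal{SN}$.
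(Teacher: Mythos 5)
A preliminary remark: the paper itself does not prove this theorem. It is imported as a known result, with the proof deferred to Parigot \cite{par3}, David and Nour \cite{dn} and Py \cite{py}, so your outline of Parigot's reducibility-candidates argument follows the same route as the source the paper leans on; there is no in-paper proof to compare against line by line.

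Judged as a proof, however, your text stops exactly where the mathematical content begins, at two places. First, the ``stability fact'' for $\m$-redexes --- if $(\m\a.u[\a:=^*v]\;\bar t)$ belongs to a candidate and $v$ is strongly normalizable then $(\m\a.u\;v\;\bar t)$ does too --- is asserted, not proved. Unlike the $\b$-case, the substitution $[\a:=^*v]$ copies $v$ under every occurrence of $\a$ and can create fresh redexes $(w\;v)$, so this expansion lemma needs its own induction on a carefully chosen measure; compare Lemma \ref{ynorm} of the paper, where even the much weaker statement ``$(t\,y)$ normalizable implies $t$ normalizable'' requires a delicate case analysis precisely because of this phenomenon. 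Second, the adequacy cases for the rules $(\m)$ and $(\perp)$ are only gestured at: you say the interpretation of a $\m$-variable must be ``a suitable set of argument sequences'' and that this is ``the real obstacle'', and then conclude ``once adequacy holds\dots''. To make this precise one must show that every candidate $|A|$ can be presented as $\mathcal{X}\leadsto\mathcal{SN}$ for a set of stacks $\mathcal{X}$ (the biorthogonal presentation; this is exactly the mechanism the paper formalizes for its semantics in Lemmas \ref{orth1} and \ref{orth2}), state the adequacy lemma with each $\a_j:B_j$ substituted by stacks orthogonal to $|B_j|$, and then verify the $(\m)$ case by testing $\m\a.v\sigma$ against all such stacks using the expansion lemma, and the $(\perp)$ case by pushing the stack through $(\a\;v\sigma)$ --- the analogue of the $(\m)$ and $(\perp)$ cases of Lemma \ref{adq}, and the very point where the argument of \cite{ns3} broke. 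Naming the obstacle is not overcoming it; as written, the proposal is an accurate outline of the standard proof rather than a proof. (A minor slip besides: in de Groote's system the rule $(\m)$ gives $\m\a.t:A$ from $t:\perp\,;\,\a:A$ for an arbitrary $A$, not only for arrow types as your parenthetical ``typed $A\f B$ because $t:\perp$ with $\a:A\f B$ available'' suggests.) Simply invoking \cite{par3} or \cite{dn}, as the paper does, would also have been acceptable; offering a proof obliges you to fill in these two steps.
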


We need some specific definitions and notations.

\begin{definition}\leavevmode
\begin{enumerate}
\item Let $t$ be a term. The term $\vec{\l \m}.t$ denotes the term $t$ preceded 
by a sequence of $\lambda$ and $\mu$ abstractions.
\item Let $t$ be a term and $\bar{v}$ a finite sequence of terms $($the empty sequence is denoted by $\emptyset$$)$. 
The term $(t \, \bar{v})$ is defined by $(t \; \emptyset) = t$ and
$(t \; u \bar{u}) = ((t \; u) \; \bar{u})$.
\item  Let us recall that a term $t$ either has a head redex
(i.e. $t=\vec{\l \m}. (u \, \bar{v})$ where $u$ is a redex called the head redex), 
or is in head normal form
(i.e. $t=\vec{\l \m}.(x \;\bar{v})$ or $t=\vec{\l \m}.((\a \;u)\,\bar{v})$ where $x$ and $\a$ are variables called the head variable).  
\item The leftmost reduction (denoted by $\rd_l$) consits in reducing the redex nearest to the left of the term. 
We can also see it as an iteration of the head reduction: 
once we find the head normal form, we reduce the arguments of the head variable.
\end{enumerate}
\end{definition}

\begin{lemma}\label{PY1}
The leftmost reduction of a normalizing term terminates.
\end{lemma}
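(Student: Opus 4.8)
The plan is to prove this by adapting the standardization / finite developments argument to the $\l\m$-calculus. Suppose $t$ is normalizing, so there is a finite reduction sequence $t = t_0 \rd t_1 \rd \cdots \rd t_k = u$ with $u$ normal. I want to produce from this a (necessarily finite) leftmost reduction of $t$ to $u$. The first step is to establish the key exchange lemma: if $t \rd_l t'$ is a step of leftmost reduction and $t \rd t''$ is \emph{some} reduction step that is not the leftmost one, then the redex contracted in $t \rd_l t'$ persists (possibly with several residuals) in $t''$, and one can postpone the non-leftmost step, i.e. there exists $s$ with $t'' \rd_l s$ (one leftmost step, contracting a residual of the same redex) and $t' \red s$. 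In other words, leftmost steps can always be pulled to the front of any reduction sequence. This is the classical ``leftmost step commutes past an inside step'' fact; here it has to be checked for both $\rd_\b$ and $\rd_\m$, and crucially for the substitution $u[\a:=^*v]$ appearing in the $\m$-rule, whose residual structure is slightly less transparent than ordinary substitution.

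Granting the exchange lemma, I would argue as follows. Consider any reduction $t \red u$ to the normal form $u$ of minimal length, say length $k$. If $k = 0$ then $t = u$ is normal and the leftmost reduction is empty; done. If $k > 0$, look at the first step $t \rd t_1$. If it happens to be the leftmost redex of $t$, recurse on $t_1 \red u$, which has length $k-1$. If it is \emph{not} the leftmost redex, apply the exchange lemma to push a leftmost step of $t$ to the front: the leftmost redex of $t$ has a residual that is contracted, producing $t \rd_l t'$ with $t' \red t_1 \red u$; but $t' \red u$ composed with $t \rd_l t'$ gives a reduction of $t$ to $u$, and by minimality of $k$ combined with the fact that pushing the leftmost step forward does not increase the total length (this is where one must be careful — a naive commutation can duplicate redexes), one needs a measure that decreases. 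The clean way is to do induction on the pair (length $k$ of the shortest reduction to normal form, complexity of the term) or, more robustly, to invoke the standardization theorem for $\l\m$: every reduction $t \red u$ can be transformed into a \emph{standard} reduction $t \red u$, and for a normal target $u$ the standard reduction is exactly the leftmost reduction, which must therefore be finite.

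The cleanest self-contained route, and the one I would actually write, is the second: \textbf{first prove a standardization theorem} for the $\l\m$-calculus (every $\red$-reduction can be reorganized into a standard one, i.e. one in which redexes are contracted in an outside-in, left-to-right discipline), then observe that a standard reduction ending in a normal form must coincide with the leftmost reduction from the source, hence the leftmost reduction terminates. Standardization for the $\l\m$-calculus is known (it follows the Klop / de Groote style proof, and references such as \cite{py} treat exactly this), so I would cite it and only sketch the reduction to our statement.

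The main obstacle will be the exchange/commutation lemma for the $\m$-rule: unlike $\b$-reduction, contracting $(\m\a.u\ v)$ does not merely substitute a subterm but rewrites every occurrence $(\a\ w)$ into $(\a\ (w\ v))$, so a single inside $\m$-step can interact with the leftmost redex in a way that requires tracking residuals through this structural substitution. Verifying that the leftmost redex still has a well-defined residual after an arbitrary inside $\rd_\b$ or $\rd_\m$ step — and that contracting it first yields a term that still reduces to the same normal form without a length blow-up — is the technical heart of the argument; everything else is routine induction.
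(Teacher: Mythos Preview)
Your proposal is correct and, in its final form, takes essentially the same route as the paper: the paper's entire proof of this lemma is the one-line citation ``See \cite{py}'', and your cleanest route likewise defers to \cite{py} for standardization in the $\l\m$-calculus and then reads off the result. Your surrounding discussion of the exchange lemma and the residual structure of the $\m$-rule is useful commentary on what such a proof involves, but none of it is carried out in the paper itself.
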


\begin{proof} 
See \cite{py}.
\end{proof} 

The previous lemma allows one to define a notion of length for normalizing terms.

\begin{definition}
Let $t$ be a normalizing term. We define $l(t)$ as the number of leftmost reductions 
needed to reach the normal form of $t$.
\end{definition}

We need to define the concept of simultaneous substitution to be able to present the correctness lemma.

\begin{definition}
Let $t,u_1,\dots  ,u_n$ be terms, $\bar{v}_1,\dots  ,\bar{v}_m$
finite sequences of terms, and $\s $ the simultaneous substitution 
$[(x_i:=u_i)_{1 \le i\le n};(\a_j:=^*\bar{v}_j)_{1 \le j \le m}]$ which is not an object of the syntax. 
Then $t\s$ is obtained from the term $t$ by replacing each $x_i$ by $u_i$ and replacing inductively each subterm of the form
$(\a_j \, u)$ in $t$ by $(\a_j (u\,\bar{v}_j))$. Formally $t \s$ is defined by induction on $c(t)$ as follows:
\begin{itemize}
\item If $t = x$ and $x \neq x_i$, then, $t\s = x$.
\item If $t = x_i$, then $t\s = u_i$.
\item If $t = \lambda x. u$, then we can assume that $x$ is a new $\lambda$-variable and $t\s = \lambda x. u\s$.
\item If $t = (u \, v)$, then $t = (u\s \, v\s)$.
\item If $t = \mu \alpha. u$, then we can assume that $\alpha$ is a new $\mu$-variable and $t\s = \mu \alpha. u\s$.
\item If $t = (\alpha \, u)$ and $\alpha \neq \alpha_j$, then $t\s = (\alpha \, u\s)$.
\item If $t = (\alpha_j \, u)$, then $t\s = (\alpha_j \, (\, u\s \, \bar{v}_j))$.
\end{itemize}
\end{definition}

\begin{lemma}\label{sub1}
If $t\red t'$ and $\sigma$ a simultaneous substitution, then $t\sigma \red t'\sigma$.
\end{lemma}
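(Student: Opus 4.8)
The plan is to argue by induction on the structure of the reduction $t \rd t'$, using the fact that $\rd$ is the compatible closure of $\rd_\beta \cup \rd_\mu$. First I would reduce to the single-step case: if $t \rd^n t'$ with $n \ge 1$, a routine induction on $n$ together with the single-step statement gives the general claim, so it suffices to treat $t \rd t'$. For the single-step case I would further decompose: the compatible-closure structure means either $t$ is itself a redex contracted at the root, or $t' $ is obtained from $t$ by contracting a redex inside a proper subterm, and in the latter case the rules for $\cdot\s$ (which commute with all term constructors, up to the harmless renaming of bound variables) let me pull the induction hypothesis through $\lambda x.\,$, $\mu\alpha.\,$, application, and $(\alpha\,\cdot)$. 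Thus the heart of the matter is the two base cases where the contracted redex is at the head.

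For the $\beta$-case, $t = (\lambda x.u\ v)$ and $t' = u[x:=v]$. Choosing (as the definition of $\cdot\s$ permits) the bound variable $x$ to be fresh, so that $x$ is none of the $x_i$ and does not occur in any $u_i$ or in the sequences $\bar v_j$, we compute $t\s = (\lambda x.u\s\ v\s) \rd_\beta u\s[x:=v\s]$. It then remains to check the substitution-commutation identity $u\s[x:=v\s] = (u[x:=v])\s$; this is a standard but slightly delicate syntactic lemma, proved by induction on $c(u)$, the only nontrivial clause being $u = (\alpha_j\,w)$, where both sides equal $(\alpha_j\,(w'\,\bar v_j\s))$ for the appropriate $w'$ — here one uses that $\cdot\s$ also substitutes into the $\bar v_j$ (more precisely, one should set up $\s$ acting on the sequences as well, or observe $x \notin \bar v_j$ so no interference occurs). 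I expect this commutation identity, together with getting the freshness bookkeeping exactly right, to be the main obstacle.

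For the $\mu$-case, $t = (\mu\alpha.u\ v)$ and $t' = \mu\alpha.u[\alpha:=^* v]$, again choosing $\alpha$ fresh for $\s$. Then $t\s = (\mu\alpha.u\s\ v\s) \rd_\mu \mu\alpha.\,u\s[\alpha:=^* v\s]$, and on the other side $t'\s = \mu\alpha.\,(u[\alpha:=^*v])\s$, so the required identity is $u\s[\alpha:=^* v\s] = (u[\alpha:=^* v])\s$. This is proved by the same kind of induction on $c(u)$; the interesting clauses are $u=(\alpha\,w)$ — both sides become $(\alpha\,(w''\ v\s))$ after unfolding one layer of each substitution — and $u=(\alpha_j\,w)$ with $\alpha_j \ne \alpha$, where the two $*$-substitutions act on disjoint $\mu$-variables and hence commute, both sides yielding $(\alpha_j\,(w''\,\bar v_j))$. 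Collecting the base cases with the compatible-closure induction and the $n$-step induction completes the proof.
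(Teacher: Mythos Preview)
Your proposal is correct and follows essentially the same approach as the paper: reduce to a single step of $\rd$, then argue by structural induction. The paper's own proof is only a two-line sketch (``It suffices to check the property for one step of reduction. Then we proceed by induction on $c(t)$.''), and your argument is a faithful and careful unpacking of that sketch; the only cosmetic difference is that you organize the induction around the compatible-closure derivation of $t \rd t'$ rather than directly on $c(t)$, which amounts to the same case analysis.
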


\begin{proof} 
It suffies to check the property for one step of reduction. Then we proceed by induction on $c(t)$.
\end{proof}

\subsection{The semantics of the system}

In this section, we define the realizability semantics and we prove its correctness lemma.
We begin by giving the definition of the saturation of sets of terms and then the operation $\leadsto$ 
between these sets which will serve to interpret the arrow on types.

\begin{definition}\leavevmode
\begin{enumerate}
\item We say that a set of terms ${\cal{S}}$ is saturated if
for all terms $u$ and $v$, if $v \red u$ and $ u\in {\cal{S}}$, then $v\in {\cal{S}}$.

\item Consider two sets of terms $\cal{K}$ and $\cal{L}$, we define a
new set of terms
$${\cal{K}} \leadsto {\cal{L}} =\{ t \in  {\cal{T}} \,\, / \,\, \forall \, u \in {\cal{K}} \; ; \; (t  \; u) \in {\cal{L}}\}.$$

\item We denote $\mathcal{T}^{<\omega}$ as the set of finite sequences of terms.  
Let ${\cal{L}}$ be a set of terms and $\cal{X} \subseteq
\cal{T}^{<\omega}$, then we define a new set of terms
$${\cal{X}} \leadsto {\cal{L}} =\{t\in  {\cal{T}} \,\, /\,\,  \forall \, \bar{u} \in {\cal{X}} \; ; \; (t\; \bar{u}) \in {\cal{L}} \}.$$
\end{enumerate}
\end{definition}

\begin{lemma} 
If $\cal{L}$ is a saturated set and $\cal{X} \subseteq
\cal{T}^{<\omega}$, then $\cal{X}\leadsto \cal{L}$ is also a
saturated one.
\end{lemma}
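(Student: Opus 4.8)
The plan is to unfold the definition of $\cal{X}\leadsto\cal{L}$ and verify the saturation closure condition directly, reducing it to the saturation of $\cal{L}$ together with the monotonicity of reduction under term application established in Lemma~\ref{sub1} (and its simpler consequence for plain application). Concretely, suppose $t\in\cal{X}\leadsto\cal{L}$ and $t'\red t$; I must show $t'\in\cal{X}\leadsto\cal{L}$, i.e.\ that $(t'\;\bar{u})\in\cal{L}$ for every $\bar{u}\in\cal{X}$.

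First I would fix an arbitrary $\bar{u}=u_1\cdots u_k\in\cal{X}$. Since $t'\red t$, the compatible closure of the reduction gives $(t'\;u_1)\red(t\;u_1)$, and iterating through the sequence (formally by induction on the length $k$ of $\bar{u}$, using the inductive definition $(t\;u\bar{u})=((t\;u)\;\bar{u})$ from the earlier definition) yields $(t'\;\bar{u})\red(t\;\bar{u})$. Because $t\in\cal{X}\leadsto\cal{L}$ we have $(t\;\bar{u})\in\cal{L}$, and since $\cal{L}$ is saturated and $(t'\;\bar{u})\red(t\;\bar{u})\in\cal{L}$, we conclude $(t'\;\bar{u})\in\cal{L}$. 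As $\bar{u}$ was arbitrary, $t'\in\cal{X}\leadsto\cal{L}$, which is exactly what saturation requires.

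There is no real obstacle here; the only point needing a word of care is the passage from $t'\red t$ to $(t'\;\bar{u})\red(t\;\bar{u})$, which is just compatibility of $\rd$ with the application context applied repeatedly. One could also invoke Lemma~\ref{sub1} with the trivial substitution $\sigma$ acting as the identity on variables (so that $t\sigma=(t\;\bar{u})$ is not literally an instance, but the same one-step-then-induction argument underlying Lemma~\ref{sub1} applies verbatim); in practice it is cleanest to state the application-context closure as an immediate consequence of $\rd$ being a compatible closure and proceed as above.
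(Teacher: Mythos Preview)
Your argument is correct and is essentially identical to the paper's own proof: take $t'\red t$ with $t\in\mathcal{X}\leadsto\mathcal{L}$, use compatibility of reduction with application to get $(t'\;\bar u)\red(t\;\bar u)$ for each $\bar u\in\mathcal{X}$, and conclude by saturation of $\mathcal{L}$. The only minor remark is that the appeal to Lemma~\ref{sub1} is unnecessary and slightly misplaced (that lemma concerns substitutions, not application contexts); the step $(t'\;\bar u)\red(t\;\bar u)$ follows directly from $\rd$ being a compatible closure, exactly as you also note.
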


\begin{proof} 
%Easy.
Let $u$ and $v$ be terms such that  $v \red u$ and $u\in \cal{X}\leadsto \cal{L}$. 
Then $\forall \, \bar{w} \in {\cal{X}}$,  $(u\; \bar{w}) \in {\cal{L}}$ and $(v \; \bar{w}) \red (u \; \bar{w})$. 
Since $\cal{L}$ is a saturated set, we have $\forall \, \bar{w} \in {\cal{X}}$, $(v \; \bar{w}) \in \cal{L}$, 
thus $v\in \cal{X}\leadsto \cal{L}$.
\end{proof} 

Now, we are going to define the realizability model for the system $S_\mu$. For that, we need many bottom sets 
$(\B_i)_{i \in I}$ including a particular one denoted by $\B_0$. We also need several sets of $\mu$-variables 
$({\cal C}_i)_{i \in I}$ which allow to pass from bottoms to $\B_0$ and vice versa. We also allow that the models 
have particular sets $(R_j)_{j \in J}$ satisfying some properties. In order to obtain the completeness theorem, 
it is possible to take from the beginning $I = \mathbb{N}$ without considering the sets $(R_j)_{j \in J}$. However, 
the flexibility to have $I \subseteq \mathbb{N}$ and some sets $(R_j)_{j \in J}$ will allow to have more models. 
This will also allow the use of the correctness lemma in order to study the computational behaviour of some typed terms 
(cf. the example given at the end of this section, Theorem \ref{example}).

\begin{definition} \leavevmode
\begin{enumerate}
\item A  model ${\cal M}$ of $S_\m$ is defined by giving three subsets
$\<({\cal C}_i)_{i \in I},(\B_i)_{i \in I}, (R_j)_{j \in J}\>$ where:
\begin{itemize}
\item  $I,J$ are subsets of $\mathbb{N}$ such that $0\in I$,
\item $({\cal C}_i)_{i \in I}$ a sequence of disjoint infinite subsets of $\m$-variables,
\item $(\B_i)_{i \in I}$ and $(R_j)_{j \in J}$ sequences of non-empty saturated subsets of terms
\end{itemize}
such that
\begin{itemize}
\item  $\forall \, i \in I$, if $\a_i \in {\cal C}_i$ and $u \in \B_0$   
then $\m \a_i.u \in \B_i$ (i.e. if, for some $\a_i \in {\cal C}_i$, $u[\a := \a_i] \in \B_0$, then $\mu \a.u \in \B_i)$,
\item $\forall \, i \in I$, if $\a_i \in {\cal C}_i$ and $u_i \in \B_i$, 
then $(\a_i \, u_i) \in  \B_0$,
\item $\forall \, j \in J$, $\exists \, i \in I$, $\exists \, {\cal X}_j \subseteq \mathcal{T}^{<\o}$, such that
$R_j= {\cal X}_j  \leadsto \B_i$.
\end{itemize}

\item If ${\cal M} = \<({\cal C}_i)_{i \in I},(\B_i)_{i \in I},(R_j)_{j \in J} \>$  is a model of $S_\m$,
we denote by $\vert {\cal M} \vert$ the smallest set containing the sets $\B_i$ and $R_j$ and 
closed under the constructor $\leadsto$.
\end{enumerate}
\end{definition}

We will see further that we did not need to get the subsets $(R_j)_{j\in J}$ to have the correcteness lemma. 
We allow a model to have such sets to enrich the concept of model and give the possibility 
of interpreting the types with more options.

Now we are going to prove that every element of a realizability model can be written as the orthogonal of a set of sequence terms. 
This property is essential to interpret the $\mu$-variables and announce the generalized correctness lemma (Lemma \ref{adq}). 
The difficulty here with respect to the semantics proposed in \cite{ns1, ns2, ns3} is the presence of several bottoms. 
As for the orthogonal of an element of a model, we will choose the convenient bottom which has the smallest index (Lemma \ref{orth2}).

\begin{lemma} \label{orth1}
Let ${\cal M} = \<({\cal C}_i)_{i \in I},(\B_i)_{i \in I},(R_j)_{j \in J} \>$ 
be a model and ${\cal{G}} \in \vert\mathcal{M}\vert$. 
There exists a set
${\cal{X_G}} \subseteq \mathcal{T}^{<\o} $ and $i \in I$ such that ${\cal G} = {\cal{X_G}} \leadsto \B_i$.
\end{lemma}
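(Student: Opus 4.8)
The plan is to prove Lemma~\ref{orth1} by induction on the construction of $\mathcal{G}$ as an element of $\vert\mathcal{M}\vert$, i.e. following the inductive definition of the smallest set containing the $\B_i$ and the $R_j$ and closed under $\leadsto$. There are three cases to treat: the base cases $\mathcal{G}=\B_i$ and $\mathcal{G}=R_j$, and the inductive step $\mathcal{G}=\mathcal{K}\leadsto\mathcal{L}$ with $\mathcal{K},\mathcal{L}\in\vert\mathcal{M}\vert$.

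For the base case $\mathcal{G}=\B_i$: I would exhibit an explicit $\mathcal{X}_{\mathcal{G}}$ realizing $\B_i=\mathcal{X}_{\mathcal{G}}\leadsto\B_i$. The natural candidate is $\mathcal{X}_{\mathcal{G}}=\{\emptyset\}$, the singleton containing the empty sequence, since $(t\;\emptyset)=t$ by definition, so $\{\emptyset\}\leadsto\B_i=\{t\in\mathcal{T}\ /\ (t\;\emptyset)\in\B_i\}=\B_i$, with the witnessing index being $i$ itself. For the base case $\mathcal{G}=R_j$: this is immediate from the third model axiom, which directly gives $i\in I$ and $\mathcal{X}_j\subseteq\mathcal{T}^{<\omega}$ with $R_j=\mathcal{X}_j\leadsto\B_i$; take $\mathcal{X}_{\mathcal{G}}=\mathcal{X}_j$.

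The inductive step is where the real work lies. Suppose $\mathcal{G}=\mathcal{K}\leadsto\mathcal{L}$ where, by the induction hypothesis applied to $\mathcal{L}$, there are $\mathcal{X}_{\mathcal{L}}\subseteq\mathcal{T}^{<\omega}$ and $i\in I$ with $\mathcal{L}=\mathcal{X}_{\mathcal{L}}\leadsto\B_i$. (Note the induction hypothesis on $\mathcal{K}$ is not needed here; $\mathcal{K}$ can be an arbitrary set of terms.) Then unfolding the definitions, $t\in\mathcal{K}\leadsto\mathcal{L}$ iff for all $u\in\mathcal{K}$, $(t\;u)\in\mathcal{X}_{\mathcal{L}}\leadsto\B_i$, iff for all $u\in\mathcal{K}$ and all $\bar{w}\in\mathcal{X}_{\mathcal{L}}$, $((t\;u)\;\bar{w})\in\B_i$, iff for all $u\in\mathcal{K}$ and all $\bar{w}\in\mathcal{X}_{\mathcal{L}}$, $(t\;u\bar{w})\in\B_i$. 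So if I set $\mathcal{X}_{\mathcal{G}}=\{u\bar{w}\ /\ u\in\mathcal{K},\ \bar{w}\in\mathcal{X}_{\mathcal{L}}\}\subseteq\mathcal{T}^{<\omega}$, then $\mathcal{G}=\mathcal{X}_{\mathcal{G}}\leadsto\B_i$ with the same index $i$. The main subtlety to be careful about is the bookkeeping of concatenation of sequences and the associativity convention $(t\;u\bar{u})=((t\;u)\;\bar{u})$, so that prepending $u$ to $\bar{w}$ indeed corresponds to the nested application; this is routine given the definitions of $(t\;\bar v)$ already fixed in the excerpt, but it is the step where an off-by-one or misplaced-parenthesis error could creep in. I do not expect any genuine obstacle: the lemma is essentially a reformulation of the closure of the class of "orthogonal-of-a-set-of-sequences" sets under $\leadsto$ with a fixed bottom, and the choice of which bottom index to keep (here simply inherited from $\mathcal{L}$) is exactly what makes the induction go through; the remark in the text about "choosing the convenient bottom with the smallest index" is relevant to the companion Lemma~\ref{orth2}, not to this one, where any witnessing index suffices.
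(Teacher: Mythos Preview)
Your proof is correct and follows exactly the same approach as the paper: induction on the construction of $\mathcal{G}\in\vert\mathcal{M}\vert$, with $\mathcal{X}_{\mathcal{G}}=\{\emptyset\}$ in the $\B_i$ case, the model axiom in the $R_j$ case, and $\mathcal{X}_{\mathcal{G}}=\{u\bar{w}\mid u\in\mathcal{K},\ \bar{w}\in\mathcal{X}_{\mathcal{L}}\}$ in the $\leadsto$ case. Your unfolding of the inductive step is in fact more detailed than the paper's, and your remark that only the hypothesis on $\mathcal{L}$ is needed (and that the smallest-index choice matters only for Lemma~\ref{orth2}) is accurate.
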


\begin{proof} 
By induction on $\cal{G}$.
\begin{itemize}
\item If ${\cal{G}}=\B_i$, take  $\cal{X_G} = \{\phi\}$.
\item If ${\cal{G}}=R_j$, then, by definition of $R_j$, ${\cal G} = {\cal{X}}_j \leadsto \B_i$ and we take ${\cal X_G} =  {\cal X}_j$.
\item If ${\cal{G}}={\cal{G}}_1 \leadsto {\cal{G}}_2$, then, by induction
hypothesis, ${\cal{G}}_2={\cal{X}}_{{\cal{G}}_2}\leadsto \B_i$
where ${\cal{X}}_{{\cal{G}}_2} \subseteq \mathcal{T}^{<\o}$, and
take ${\cal{X_G}}=\{u\bar{v}$ / $u\in {\cal{G}}_1$ and $\bar{v}\in
{\cal{X}}_{{\cal{G}}_2}\}$. \qedhere
\end{itemize}
\end{proof}

\begin{definition} 
If ${\cal M} = \<({\cal C}_i)_{i \in I},(\B_i)_{i \in I}, (R_j)_{j \in J}\>$ 
is a model and ${\cal{G}} \in \vert \mathcal{M}\vert$, let
\begin{itemize}
\item $w({\cal{G}})$ the smallest integer  $i$ such that 
${\cal{G} = X} \leadsto \B_i$ for some ${\cal X} \subseteq \mathcal{T}^{<\o}$,
\item  ${\cal{G}}^\perp =\bigcup\{{\cal{X}} \subseteq \mathcal{T}^{<\o} \,\, / \,\, {\cal{G=
X}} \leadsto \B_{w({\cal{G}})}\}$. 
\end{itemize}
\end{definition}

\begin{lemma} \label{orth2}
Let ${\cal M} = \<({\cal C}_i)_{i \in I},(\B_i)_{i \in I},(R_j)_{j \in J} \>$ 
be a model and ${\cal{G}} \in \vert\mathcal{M}\vert$, then 
${\cal G} = {\cal G}^\perp \leadsto \B_{w({\cal{G}})}$.
\end{lemma}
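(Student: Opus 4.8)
The plan is to prove the two inclusions ${\cal G} \subseteq {\cal G}^\perp \leadsto \B_{w({\cal{G}})}$ and ${\cal G}^\perp \leadsto \B_{w({\cal{G}})} \subseteq {\cal G}$ separately. Write $i = w({\cal{G}})$ for brevity. By Lemma~\ref{orth1} (and the very definition of $w({\cal{G}})$) we know that there is at least one family ${\cal X} \subseteq \mathcal{T}^{<\o}$ with ${\cal G} = {\cal X} \leadsto \B_i$, so the set ${\cal G}^\perp = \bigcup\{{\cal X} \subseteq \mathcal{T}^{<\o} \,/\, {\cal G} = {\cal X} \leadsto \B_i\}$ is a union over a nonempty collection.

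For the inclusion ${\cal G} \subseteq {\cal G}^\perp \leadsto \B_i$: let $t \in {\cal G}$ and let $\bar u \in {\cal G}^\perp$. By definition of ${\cal G}^\perp$ as a union, $\bar u$ lies in some ${\cal X}$ with ${\cal G} = {\cal X} \leadsto \B_i$. Since $t \in {\cal G} = {\cal X} \leadsto \B_i$ and $\bar u \in {\cal X}$, we get $(t\;\bar u) \in \B_i$. As $\bar u$ was arbitrary in ${\cal G}^\perp$, this shows $t \in {\cal G}^\perp \leadsto \B_i$.

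For the reverse inclusion ${\cal G}^\perp \leadsto \B_i \subseteq {\cal G}$: fix any one family ${\cal X}_0$ witnessing ${\cal G} = {\cal X}_0 \leadsto \B_i$ (this exists by Lemma~\ref{orth1}). Then ${\cal X}_0 \subseteq {\cal G}^\perp$ by construction of the union. Now ${\cal G}^\perp \leadsto \B_i$ is "more demanding" than ${\cal X}_0 \leadsto \B_i$: indeed, if $t \in {\cal G}^\perp \leadsto \B_i$ then $(t\;\bar u) \in \B_i$ for every $\bar u \in {\cal G}^\perp$, hence in particular for every $\bar u \in {\cal X}_0$, so $t \in {\cal X}_0 \leadsto \B_i = {\cal G}$. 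Combining the two inclusions gives ${\cal G} = {\cal G}^\perp \leadsto \B_i$.

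The only subtle point — and the place where the hypothesis $0 \in I$ and the choice of the \emph{smallest} index $i$ matter — is that ${\cal G}^\perp$ must be taken with respect to $\B_{w({\cal{G}})}$ and not with respect to some other $\B_k$; one could in principle also have ${\cal G} = {\cal Y} \leadsto \B_k$ for a different $k$, but the definition pins down $i = w({\cal{G}})$ so that ${\cal G}^\perp$ is unambiguous. Everything else is a straightforward unfolding of the definition of $\leadsto$ on sequences of terms, so I do not expect a genuine obstacle; the argument is essentially the standard "double orthogonal collapses" computation, adapted to the setting where the ambient bottom set is chosen canonically by minimality of the index.
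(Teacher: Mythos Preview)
Your proof is correct and follows essentially the same approach as the paper: both directions are obtained by unfolding the definition of ${\cal G}^\perp$ as a union, using Lemma~\ref{orth1} to guarantee at least one witness ${\cal X}_0$ with ${\cal G} = {\cal X}_0 \leadsto \B_{w({\cal G})}$, and then exploiting ${\cal X}_0 \subseteq {\cal G}^\perp$ for the $\supseteq$ inclusion. One small remark: your closing comment that the hypothesis $0 \in I$ matters here is not quite right---that condition is used elsewhere in the model (for the interpretation of $\perp$ and the correctness lemma), but it plays no role in the present argument; only the well-definedness of $w({\cal G})$ via minimality is needed.
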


\begin{proof} \leavevmode
%Easy.
\begin{itemize}[leftmargin=8mm]
\item[$\subseteq$ :] Let $t \in {\cal G}$. If $\bar{u} \in {\cal G}^\perp$, 
then $\bar{u} \in {\cal{X}} \subseteq \mathcal{T}^{<\o}$ and ${\cal{G=
X}} \leadsto \B_{w({\cal{G}})}$, thus $(t \; \bar{u}) \in \B_{w({\cal{G}})}$. 
Therefore $t \in {\cal G}^\perp \leadsto \B_{w({\cal{G}})}$.

\item[$\supseteq$ :] By Lemma \ref{orth1}, we have ${\cal G} = {\cal X} \leadsto \B_{w({\cal{G}})}$ 
for some ${\cal X} \subseteq \mathcal{T}^{<\o}$, then ${\cal X} \subseteq  {\cal G}^\perp$. 
Let $t \in {\cal G}^\perp \leadsto \B_{w({\cal{G}})}$. We have $\forall \, \bar{u} \in {\cal X}$, $\bar{u} \in {\cal G}^\perp$, 
then $(t \; \bar{u}) \in \B_{w({\cal{G}})}$. Therefore $t \in {\cal G}$.
\qedhere
\end{itemize}
\end{proof} 

We can now interpret the types in a model and also give the definition of the general interpretation of a type.

\begin{definition} \leavevmode
\begin{enumerate}
\item Let ${\cal M} = \<({\cal C}_i)_{i \in I},(\B_i)_{i \in I}, (R_j)_{j \in J}\>$ be a model.
\begin{enumerate}
\item An $\mathcal{M}$-interpretation $\cal{I}$ is a
function $X \mapsto {\cal{I}}(X)$ from the set of propositional
variables $\mathcal{P}$ to $\vert\mathcal{M}\vert$ which we extend for any
formula as follows: ${\cal{I}}(\perp)=\B_0$ and
${\cal{I}}(A \to B)= {\cal{I}}(A) \leadsto {\cal{I}}(B)$.
\item For any type $A$, we denote $\vert A \vert_\mathcal{M} =\bigcap
\{ {\cal{I}}(A) \,\, / \,\, {\cal{I}}$ an $\mathcal{M}$-interpretation$\}$ the interpretation of $A$ in the model $\mathcal{M}$.
\end{enumerate}
\item For any type $A$, we denote $|A|= \bigcap\{\vert A \vert_\mathcal{M} \,\, / \,\, \mathcal{M}$ a model$\}$ 
the general interpretation of~$A$.
\end{enumerate}
\end{definition}

The correctness lemma is sort of a validation of the notion of models that we considered. 
It states that a term of a type $A$ is within the general interpretation of $A$. As for the semantics defined in \cite{ns3}, 
the proof of the correctness lemma is false and the mistake is difficult to find. It is the case of the typing rule $(\mu)$ 
which is problematic. The mistake comes from the permission to have $\mu$-variables in the sequence of terms by adopting Saurin's 
interpretation.

\begin{lemma} [General correctness lemma] \label{adq} % $\;$\\
Let  ${\cal M} = \<({\cal C}_i)_{i \in I},(\B_i)_{i \in I}, (R_j)_{j \in J}\>$ be a model,
${\cal{I}}$ an $\mathcal{M}$-interpretation,% \\
$\Gamma =\{x_k : A_k\}_{\substack {1\le k\le n}}$, 
$\Delta =\{\a_r: B_r\}_{\substack {1\le r\le m}}$ such that 
$\a_r \in {\cal{C}}_{ w({\cal{I}}(B_r))}$ $(1\le r\le m)$, $u_k \in \;{\cal{I}}(A_k)$ $(1\le k\le n)$, 
$\bar{v_r} \in\; ({\cal{I}}(B_r))^\perp$ $(1\le r\le m)$) and \\
$\sigma =[(x_k:=u_k)_{1 \le k \le n};(\a_r:=^*\bar{v}_r)_{1\le r\le m}]$.
If $\Gamma \vdash u:A \,\,\,;\,\Delta$, then $u\sigma \in {\cal{I}}(A)$.
\end{lemma}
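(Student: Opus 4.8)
The plan is to prove the statement by induction on the derivation of $\Gamma \vdash u : A \, ; \, \Delta$, following the typing rules of $S_\m$ one at a time. The substitution $\s$ is fixed together with the hypotheses on $\G$, $\D$, the $u_k$, the $\bar v_r$, and crucially the fact that each $\a_r$ is chosen in the cell ${\cal C}_{w({\cal I}(B_r))}$ that matches the smallest bottom index of ${\cal I}(B_r)$. The axiom case $(ax)$ is immediate: $x_i \s = u_i \in {\cal I}(A_i)$ by hypothesis. For $(\to_i)$, if $u = \l x.t$ comes from $\G, x{:}A \v t : B ; \D$, I would take an arbitrary $w \in {\cal I}(A)$ and extend $\s$ by $x := w$; the induction hypothesis gives $t\s[x:=w] \in {\cal I}(B)$, and since $(\l x.t\s \; w) \red t\s[x:=w]$ (Lemma \ref{sub1} shows the substitution commutes with reduction, and $(\l x.t)\s = \l x.(t\s)$) and ${\cal I}(B)$ is saturated, we get $(\l x.t\s \; w) \in {\cal I}(B)$, i.e. $(\l x.t)\s \in {\cal I}(A) \leadsto {\cal I}(B) = {\cal I}(A \to B)$. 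For $(\to_e)$, from $u\s \in {\cal I}(A\to B)$ and $v\s \in {\cal I}(A)$ (both by induction) the definition of $\leadsto$ gives $(u\s \; v\s) = (u\;v)\s \in {\cal I}(B)$ directly.

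The two genuinely new cases are $(\m)$ and $(\perp)$, and the $(\m)$ case is where I expect the main obstacle to lie — this is precisely the rule that broke in \cite{ns3}. Suppose $u = \m \a.t$ is derived from $\G \v t : \perp ; \D, \a{:}A$. Let $i = w({\cal I}(A))$. I want to show $(\m \a.t)\s \in {\cal I}(A)$. By Lemma \ref{orth2}, ${\cal I}(A) = ({\cal I}(A))^\perp \leadsto \B_i$, so it suffices to show that for every $\bar v \in ({\cal I}(A))^\perp$ we have $((\m \a.t)\s \; \bar v) \in \B_i$. Now $((\m \a.t)\s \; \bar v) \red \m \a.(t\s[\a:=^*\bar v])$ by the $\m$-rule (after checking, via the substitution definition, that $(\m\a.t)\s = \m\a.(t\s)$ with $\a$ fresh, and that the $\m$-redex reduces as claimed). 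Since $\B_i$ is saturated, it is enough to show $\m \a.(t\s[\a:=^*\bar v]) \in \B_i$; and by the model axiom ``if $\a_i \in {\cal C}_i$ and $w \in \B_0$ then $\m\a_i.w \in \B_i$'' (applied to a suitably renamed $\a_i \in {\cal C}_i = {\cal C}_{w({\cal I}(A))}$), it suffices to show $t\s[\a:=^*\bar v] \in \B_0 = {\cal I}(\perp)$. But $t\s[\a:=^*\bar v]$ is exactly $t\s'$ where $\s'$ is $\s$ augmented by the binding $\a :=^* \bar v$ — and this extended substitution satisfies the hypotheses of the lemma for the context $\D, \a{:}A$, because $\a \in {\cal C}_{w({\cal I}(A))}$ by the freshness/renaming convention and $\bar v \in ({\cal I}(A))^\perp$. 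So the induction hypothesis applied to $\G \v t : \perp ; \D, \a{:}A$ gives $t\s' \in {\cal I}(\perp) = \B_0$, as needed. The delicate point — and the reason the multi-bottom setup is needed — is that the model axiom only lets us form $\m\a_i.w$ when $w \in \B_0$ (not an arbitrary $\B_i$), which is exactly why ${\cal I}(\perp)$ must be $\B_0$ and why the orthogonal in Lemma \ref{orth2} is taken with respect to the \emph{smallest} index $w({\cal I}(A))$; one must verify these indices line up.

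For the $(\perp)$ rule, $u = (\a\;t)$ is derived from $\G \v t : A ; \D, \a{:}A$ with conclusion $\perp$, and in $\s$ the variable $\a$ is bound by $\a :=^* \bar v_\a$ for some $\bar v_\a \in ({\cal I}(A))^\perp$ (here $\a$ is one of the $\a_r$, with $B_r = A$). By the induction hypothesis $t\s \in {\cal I}(A)$. By the definition of simultaneous substitution, $(\a\;t)\s = (\a\;(t\s\;\bar v_\a))$. Since $t\s \in {\cal I}(A) = ({\cal I}(A))^\perp \leadsto \B_{w({\cal I}(A))}$ (Lemma \ref{orth2}) and $\bar v_\a \in ({\cal I}(A))^\perp$, we get $(t\s\;\bar v_\a) \in \B_{w({\cal I}(A))}$. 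Finally, since $\a \in {\cal C}_{w({\cal I}(A))}$, the second model axiom ``if $\a_i \in {\cal C}_i$ and $u_i \in \B_i$ then $(\a_i\;u_i) \in \B_0$'' yields $(\a\;(t\s\;\bar v_\a)) \in \B_0 = {\cal I}(\perp)$, which is the conclusion. Throughout, the routine bookkeeping I would suppress here is: checking that $\s$ commutes with the term constructors and with one step of reduction (Lemma \ref{sub1}), and maintaining the freshness conventions on bound $\l$- and $\m$-variables so that extending $\s$ never captures a variable and the cell-membership condition $\a \in {\cal C}_{w({\cal I}(\cdot))}$ is preserved.
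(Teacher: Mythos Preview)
Your proposal is correct and follows essentially the same route as the paper: induction on the typing derivation, with the $(\m)$ case handled via Lemma~\ref{orth2} together with the two model axioms (first passing through $\B_0$ by the induction hypothesis, then lifting to $\B_{w({\cal I}(A))}$), and the $(\perp)$ case by the dual axiom. The only differences are cosmetic---you spell out the appeal to Lemma~\ref{orth2} and to saturation more explicitly than the paper does, and you occasionally reuse the letter $A$ for the domain type in $(\to_i)$ and the $\m$-variable's type in $(\perp)$, which clashes with the $A$ of the lemma statement; this is harmless but worth cleaning up.
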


\begin{proof} 
By induction on the  derivation, we consider the last rule used.

\begin{itemize}[leftmargin=8mm]

\item[$ax$:] In this case $u=x_k$ and $A=A_k$, then $u\s =u_k \in {\cal{I}}(A)$.

\item[$\to_i$:] In this case $u= \l x.v$ and $A =B \to C$ such that
$\G ,x:B \v v:C \; ;\D$. Let $w \in {\cal{I}}(B)$ and $\d
=\s+[x:=w]$, by induction hypothesis, $v\d \in {\cal{I}}(C)$. Since
$(\l x.v\s \; w) \red v\d$, then $(\l x.v\s \; w) \in {\cal{I}}(C)$. Therefore $\l x.v \s \in
{\cal{I}}(B) \leadsto {\cal{I}}(C)$ and finally $u\s \in {\cal{I}}(A)$.

\item[$\to_e$:] In this case $u =(u_1 \, u_2)$, $\G \v u_1:B \to A \;
  ;\D$ and $\G \v u_2:B\; ;\D$.  By induction hypothesis, $u_1\s
  \in {\cal{I}}(B) \leadsto {\cal{I}}(A)$ and $u_2\s \in
      {\cal{I}}(B)$, therefore $(u_1\s \, u_2\s) \in {\cal{I}}(A)$, i.e. $u\s \in {\cal{I}}(A)$.

\item[$\m$:] In this case $u =\m \a. v$ and  $\G \v v: \perp \; ;\a : A , \D$. 
We can assume that $\a$ is a new variable which belongs to $\mathcal{C}_{w({\cal{I}}(A))}$ 
 (there is always such a
variable because the sets $\mathcal{C}_i$ are infinite). Let $\bar{v}\in
({\cal{I}}(A))^{\perp}$ and $\d =\s +[\a:=^*\bar{v}]$. By  induction hypothesis,
$v\d \in {\B_0}$, then $\m \a. v\d \in \B_{w({\cal{I}}(A))}$. Since
$(\m \a. v \s \; \bar{v}) \red \m \a. v \d$, then, $(\m \a. v \s \; \bar{v}) \in \B_{w({\cal{I}}(A))}$.
We deduce that for all $\bar{v}\in ({\cal{I}}(A))^{\perp}$, $(\m \a. v \s \; \bar{v}) \in \B_{w({\cal{I}}(A))}$,
therefore $\m \a. v \s \in {\cal{I}}(A)$, i.e. $u\s \in {\cal{I}}(A)$.

\item[$\bot$:] In this case $u =(\a_r \; v)$, $A = \perp$, $\D = \a_r : B_r , \D'$ and $\G \v v: B_r \; ; \D$. 
By induction hypothesis, $v\s \in {\cal{I}}(B_r)$, hence $(v\s \;\bar{v_r}) \in
\B_{w({{\cal{I}}(B_r))}}$, therefore $(\a_r \;(v \s \;\bar{v_r})) \in \B_0$, i.e. $u\s \in {\cal{I}}(A)$.
\qedhere
\end{itemize}
\end{proof}

We can now state and prove the correctness lemma.

\begin{corollary}[Correctness lemma] \label{closed}
Let $A$ be a type and $t$ a closed term. 
If $\vdash t:A$, then~$t \in \vert A \vert$.
\end{corollary}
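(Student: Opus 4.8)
The plan is to derive the Correctness Lemma (Corollary \ref{closed}) directly from the General Correctness Lemma (Lemma \ref{adq}) by instantiating it with empty contexts. Since $t$ is closed and $\vdash t : A$, we have a derivation with $\Gamma = \emptyset$ and $\Delta = \emptyset$ (i.e.\ $n = m = 0$ in the notation of Lemma \ref{adq}). Then the simultaneous substitution $\sigma = [\,;\,]$ is empty, so $t\sigma = t$. Fix an arbitrary model ${\cal M}$ and an arbitrary ${\cal M}$-interpretation ${\cal I}$. The hypotheses of Lemma \ref{adq} about the $u_k$, the $\bar v_r$, and the $\alpha_r$ are vacuously satisfied since there are no such objects. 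Hence Lemma \ref{adq} gives $t = t\sigma \in {\cal I}(A)$.

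Next I would quantify over interpretations and models. Since the interpretation ${\cal I}$ was arbitrary, $t \in \bigcap\{{\cal I}(A) \mid {\cal I} \text{ an } {\cal M}\text{-interpretation}\} = \vert A \vert_{\cal M}$ by definition of the interpretation of $A$ in the model ${\cal M}$. Since the model ${\cal M}$ was also arbitrary, $t \in \bigcap\{\vert A \vert_{\cal M} \mid {\cal M} \text{ a model}\} = |A|$ by definition of the general interpretation. This is exactly the desired conclusion.

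There is essentially no obstacle here: the corollary is a straightforward specialization of Lemma \ref{adq}, and the only thing to be slightly careful about is checking that the side conditions of Lemma \ref{adq} (in particular the requirement $\alpha_r \in {\cal C}_{w({\cal I}(B_r))}$ and the membership conditions on $u_k$ and $\bar v_r$) hold trivially in the empty-context case, and that the empty substitution indeed acts as the identity on $t$, which is immediate from the definition of simultaneous substitution. One should also note that at least one model exists (e.g.\ take $I = \{0\}$, $J = \emptyset$, ${\cal C}_0$ any infinite set of $\mu$-variables, and $\B_0$ any non-empty saturated set such as the set of strongly normalizable terms), so that the intersection defining $|A|$ is not over an empty family and the statement is non-vacuous; likewise ${\cal M}$-interpretations always exist since $\vert {\cal M}\vert$ is non-empty.
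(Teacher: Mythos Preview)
Your proof is correct and follows exactly the same approach as the paper: apply Lemma \ref{adq} with empty contexts (so that $\sigma$ is the identity on $t$), and then intersect over all interpretations and all models. Your version is simply more explicit about the vacuous side conditions and the non-emptiness of the family of models, but the argument is identical.
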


\begin{proof} 
Let $\mathcal{M}$ be a model and ${\cal{I}}$ an
$\mathcal{M}$-interpretation. Since $\vdash t:A$, then, by the general correctness lemma, 
$t \in {\cal{I}}(A)$. This is true for any model
$\mathcal{M}$ and for any $\mathcal{M}$-interpretation ${\cal{I}}$, therefore
$t \in |A|$.
\end{proof}

%Note that we can have a correctness lemma with models having only one bottom set $\B_0$.
According to the cases $\to_i$ and $\m$ of the proof of Lemma \ref{adq}, 
we observe that the saturation conditions can be weakened as follows. 
We do not need the saturation by expansion but only the saturation by weak-head expansion.

{\it A set of terms ${\cal{S}}$ is saturated if:
\begin{itemize}
\item for all terms $u,v,\overline{w}$, 
if $(u[x := v] \; \overline{w}) \in {\cal{S}}$, then $(\l x.u \; v\overline{w})\in {\cal{S}}$.
\item for all terms $u,\overline{v}$, if $\mu \a.u[\a :=^* \overline{v}] \in {\cal{S}}$, 
then $(\mu \a.u \; \overline{v})\in {\cal{S}}$.
\end{itemize}
}

We take an example from \cite{ns3} to show that it is sometimes useful not to have in a model an infinite number 
of bottoms as well as the relevance of the sets $(R_j)_{j \in J}$.
Let $e =\lambda x. \mu \a. x$, then we have $\vdash e : \perp \to X$ and for all finite sequence
of terms $t \, \bar{u}$, $(e \; t\, \bar{u}) \red \m \a. t$. 
We will prove this general result.

\begin{theorem}\label{example} Let $E$ be a closed term of type $\perp \to X$, then, for each
finite sequence of distinct $\l$-variables $x \, \bar{y}$, $(E \;x \, \bar{y}) \red \sou{\m}. x$ where 
$\sou{\m}. x$ denote the variable $x$ preceded by $\mu$-abstractions and $\mu$-applications.
\end{theorem}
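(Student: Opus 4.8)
The plan is to use the correctness lemma (Corollary~\ref{closed}) together with a carefully chosen model that ``forces'' every inhabitant of $\perp\to X$ to behave like $e=\l x.\m\a.x$ on the arguments $x\,\bar y$. Since $E$ is closed and $\vdash E:\perp\to X$, the correctness lemma gives $E\in |\perp\to X|$, hence $E\in{\cal I}(\perp)\leadsto{\cal I}(X)$ for \emph{every} model ${\cal M}$ and every ${\cal M}$-interpretation ${\cal I}$. The idea is then to design a single model ${\cal M}$ in which ${\cal I}(\perp)=\B_0$ can be taken to contain a variable $x$ (after suitably instantiating $\m$-variables), and in which ${\cal I}(X)=\B_i$ for some $i$ is precisely a set of terms all of whose members reduce to something of the shape $\sou\m.x$. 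Then $(E\;x)\in{\cal I}(X)$, and applying the $R_j$-sets to absorb the remaining arguments $\bar y$ will yield $(E\;x\,\bar y)\red\sou\m.x$.

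More concretely, first I would fix the distinct $\l$-variables $x,\bar y$ and define $\B_0$ to be the saturation (by weak-head expansion) of the single term $x$ — more precisely the set of terms $t$ such that $t$ is normalizing and $l(t)$-many leftmost steps bring $t$ to a term of the form $\sou\m.x$ (a term built from $x$ by $\m$-abstractions and $\m$-applications). One checks $\B_0$ is non-empty and saturated. Next, choose the index set, say $I=\{0\}$ (or a small finite set), set all ${\cal C}_i$ to be disjoint infinite sets of $\m$-variables, and verify the two closure conditions: if $\a_0\in{\cal C}_0$ and $u\in\B_0$ then $\m\a_0.u\in\B_0$ (true, since prepending a $\m$-abstraction keeps us in the class $\sou\m.x$), and if $\a_0\in{\cal C}_0$ and $u\in\B_0$ then $(\a_0\,u)\in\B_0$ (again true, a $\m$-application of a $\sou\m.x$-term is a $\sou\m.x$-term). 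Then I would introduce one set $R_j$ for the job of swallowing the trailing variables: take ${\cal X}_j=\{\,\bar y\,\}\leadsto$ (or rather the singleton sequence $\bar y$), so that $R_j=\{\bar y\}\leadsto\B_0$, i.e.\ the terms $t$ with $(t\,\bar y)\in\B_0$; this is saturated by the earlier lemma. Finally pick the interpretation ${\cal I}$ with ${\cal I}(X)=R_j$ (legitimate since $R_j\in|{\cal M}|$), so ${\cal I}(\perp\to X)=\B_0\leadsto R_j$.

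Now run the argument: $E\in\B_0\leadsto R_j$ and $x\in\B_0$ give $(E\;x)\in R_j=\{\bar y\}\leadsto\B_0$, hence $(E\;x\,\bar y)\in\B_0$, which by definition of $\B_0$ means $(E\;x\,\bar y)$ is normalizing and its normal form — reached by leftmost reduction — is of the form $\sou\m.x$. Since $(E\;x\,\bar y)$ is typable it is strongly normalizing (Theorem~\ref{SN}), so ``normalizing'' is automatic, and by confluence the unique normal form witnesses $(E\;x\,\bar y)\red\sou\m.x$. The one point that needs care is showing $x\in\B_0$ and, more delicately, that $\B_0$ is genuinely saturated in the required sense: one must argue that a weak-head expansion of a term whose leftmost-normal form is $\sou\m.x$ again has leftmost-normal form $\sou\m.x$, using that $\rd_l$ computes the normal form (Lemma~\ref{PY1}) and confluence. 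I expect the main obstacle to be pinning down exactly the right class of ``$\sou\m.x$-terms'' and checking it is closed under the two $\m$-saturation conditions and under weak-head expansion simultaneously; once $\B_0$ is correctly defined, the rest is a direct application of Corollary~\ref{closed}.
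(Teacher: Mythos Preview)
Your proposal is correct and follows essentially the same route as the paper: take $I=\{0\}$ with ${\cal C}_0={\cal A}$, let $\B_0$ be the set of terms reducing to some $\sou\m.x$, add the single auxiliary set ${\cal R}=\{\bar y\}\leadsto\B_0$, interpret $X$ by ${\cal R}$, and apply Corollary~\ref{closed} to obtain $(E\;x\,\bar y)\in\B_0$. The only difference is that the paper defines $\B_0$ directly as $\{t\mid t\red\sou\m.x\}$ rather than via leftmost normal forms; this set is saturated immediately by transitivity of $\red$ and is closed under the two $\m$-conditions by inspection, so the delicate verification you anticipate (normalizability, confluence, weak-head saturation) is not needed.
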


\begin{proof} 
Let $x \, \bar{y}$ be a finite sequence of distinct
$\l$-variables, $\mathcal{C}_0=\mathcal{A}$, 
$\B_0=\{t \in {\cal T}\,\, / \,\, t\red \sou{\m}. x \}$, 
${\cal{R}}=\{\bar{y}\}\leadsto \B_0$, $\mathcal{M}=\<\mathcal{C}_0,\B_0,{\cal{R}}\>$ and
${\cal{I}}$ the interpretation such that
${\cal{I}}(X)={\cal{R}}$. By Corollary \ref{closed}, $E \in {\cal{I}}
(\perp \to X) = \B_0 \leadsto  (\{\bar{y}\}\leadsto \B_0)$. Since $x \in \B_0$ and $\bar{y} \in \{\bar{y}\}$, 
we have $(E\;x \, \bar{y}) \in 
\B_0$, and finally $(E \;x\, \bar{y}) \red \sou {\m}. x$.
\end{proof}

The model that we considered in the previous proof only contains $\B_0$ 
and the set $\cal{R}$ is necessary to find the computational behavior of the term $E$.

\section{The completeness result}

Before presenting our completeness result, we need some definitions and some technical results.

\subsection{Some results}

Lemma \ref{ynorm} is very intuitive. It states that, for a $\l$-variable $y$, if the term $(t \, y)$ is normalizable, 
then $t$ is also normalizable. The proof of this result in $\l$-calculus is very simple because a reduction 
of $(t \, y)$ will automatically give a reduction of $t$ (the variable $y$ interacts only when $t$ reduces 
to a $\l$-abstraction $\l x. t'$ and we get in this case $t'[x:=y]$). The situation in $\l\mu$-calculus is completly different. 
If the term $t$ is reduced to a $\mu$-abstraction $\mu \a. t'$ and we reduce the term $(\mu \a. t' \, y)$, we get 
$\mu \a.t'[\a: =^*y]$, the variable $y$ can be found in several sub-terms of $t'$ and may also create other redex. Hence the need for a detailed and 
comprehensive proof of Lemma \ref{ynorm}. Note also that the proof of this result in \cite{ns3} is not correct.

\begin{lemma}\label{ynorm}
Let $t$ be a term and $y$ a $\l$-variable.  If $(t \, y)$ is normalizable, then $t$ is also normalizable.
\end{lemma}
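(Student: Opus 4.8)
The plan is to prove the contrapositive in a suitably strengthened form: if $t$ is not normalizable, then $(t\,y)$ is not normalizable either. The naive approach — take an infinite reduction of $t$ and mimic it inside $(t\,y)$ — fails exactly as the introduction warns: when $t$ reduces to a head $\m$-abstraction $\m\a.t'$, the step $(\m\a.t'\,y)\rd_\m \m\a.t'[\a:=^*y]$ substitutes $y$ throughout $t'$, so an infinite reduction of $t$ does not translate verbatim into one of $(t\,y)$. To get around this I would first record the behaviour of a $\l$-variable argument: the substitution $[\a:=^*y]$ only ever produces subterms of the shape $(\a\,(w\,y))$ out of $(\a\,w)$, and since $y$ is a variable it creates no new redex by itself — the only new redexes are $\b$-redexes $(\l z.r\,y)$, which contract to the renaming $r[z:=y]$, and nested $\m$-applications of the form $(\a\,(w\,y))$ that later get further arguments. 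So the effect of feeding $y$ is ``benign'': it can rename bound $\l$-variables and push $y$ deeper, but it cannot, on its own, turn a strongly normalizing term into a non-normalizing one, and conversely it cannot collapse an infinite reduction.

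Concretely, I would argue by induction, reducing $(t\,y)$ by the \emph{leftmost} strategy (Lemma~\ref{PY1} guarantees this strategy reaches a normal form precisely when the term is normalizable, so it suffices to show leftmost reduction of $(t\,y)$ does not terminate when $t$ is not normalizable). Put $t$ in its head form. If $t=\vec{\l\m}.(x'\,\bar v)$ or $\vec{\l\m}.((\b\,w)\,\bar v)$ is in head normal form, then $(t\,y)$ either is itself in head normal form (when the leading abstractions of $t$ are $\m$'s, after possibly one $\m$-step that only renames and substitutes $y$, leaving the subterms of $\bar v$ essentially unchanged up to the benign substitution) or, when $t$'s outermost abstraction is $\l$, $(t\,y)$ head-reduces to $\vec{\l\m}'.(\cdots)[\,\cdot:=y]$; in all cases some argument among $\bar v$ that witnesses non-normalizability of $t$ reappears, up to the benign substitution, as an argument of the head variable of (a reduct of) $(t\,y)$, and one recurses on that argument. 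If $t$ has a head redex, $t=\vec{\l\m}.(r\,\bar v)$ with $r$ a redex, then $(t\,y)=\vec{\l\m}.(r\,\bar v\,y)$ has the same head redex; contracting it on both sides and using Lemma~\ref{sub1} (and the explicit description of $[\a:=^*v]$) to see that feeding $y$ commutes with this head contraction, one reduces to the same claim for the head-contractum, which is again non-normalizable, and the number of head reductions is strictly controlled.

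The key obstacle — and the reason the proof is long, as the authors stress — is bookkeeping the $\m$-case: one must verify that when $t\red \vec{\m}.(\,\cdots)$ with leading $\m$'s and we apply $y$, the resulting substitutions $[\a_i:=^*y]$ do not destroy the non-normalizing argument hidden inside $t$, i.e. that ``$s$ not normalizable'' is preserved under $s\mapsto s[\a:=^*y]$ and under $s\mapsto s[z:=y]$. Both are instances of the general fact that substituting a variable for a variable (and the $\m$-variant thereof, which merely rewraps occurrences and appends the variable $y$) cannot make a non-normalizable term normalizable — intuitively because any normal form of the substituted term would pull back, by ``un-substituting'' (replacing $y$ again by a fresh variable and stripping the extra applications), to a normal form of the original. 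I would isolate this as a preliminary sublemma and prove it by induction on the term together with a simultaneous statement about leftmost reduction length, and then the main induction on $l((t\,y))$ (or on the pair: normal form size of an argument, ordered lexicographically) goes through.
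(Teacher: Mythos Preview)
Your overall shape --- case analysis on the head form of $t$, tracking leftmost reduction of $(t\,y)$, and isolating the effect of the substitution $[\a:=^*y]$ as the crux --- matches the paper's. But two things are off.

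First, the framing is muddled. You open by announcing the contrapositive (``$t$ not normalizable $\Rightarrow$ $(t\,y)$ not normalizable'') but then propose the main induction on $l((t\,y))$. That measure is only defined when $(t\,y)$ \emph{is} normalizable, so it belongs to the direct argument, not the contrapositive. The paper works directly: assume $(t\,y)$ normalizable and induct on the pair $(l((t\,y)),\,c(t))$ lexicographically.

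Second, and more importantly, your ``preliminary sublemma'' --- that $s$ not normalizable implies $s[\a:=^*y]$ not normalizable, equivalently that $s[\a:=^*y]$ normalizable implies $s$ normalizable --- is exactly the hard core of the problem, and you have not said how to prove it without circularity. The naive simulation fails: the substitution $[\a:=^*y]$ can create new redexes (whenever some $(\a\,w)$ has $w$ an abstraction), and these new redexes can sit to the left of the old ones, so leftmost reduction of $s[\a:=^*y]$ need not track leftmost reduction of $s$. The paper does not prove this as a separate lemma at all. Instead it folds it back into the main induction by a wrapping trick: from ``$u_i[\a:=^*y]$ normalizable'' one observes $(\m\a.u_i\,y)\rd_l \m\a.u_i[\a:=^*y]$, so $(\m\a.u_i\,y)$ is normalizable with $l((\m\a.u_i\,y))\le l((t\,y))$; now the induction hypothesis applies to the pair $(\m\a.u_i,\,y)$ because $c(\m\a.u_i)<c(t)$, yielding $\m\a.u_i$ (hence $u_i$) normalizable. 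The second coordinate $c(t)$ in the induction is there precisely to license this step. Your plan, lacking both the wrapping trick and the complexity parameter, leaves the substitution case unproved.
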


\begin{proof} 
By induction on the pair $(l,c)$ (for the lexicographical order) where $l = l((t \, y))$ and $c = c(t)$.

\begin{enumerate}

\item If $t$ does not begin with $\l$ or $\m$, then $(t \, y) \rd_l (t' \, y)$ where
$t  \rd_l t' $. By induction hypothesis ($l$ decreases), $t'$ is normalizable, then $t$ is also normalizable.

\item If $t$ begins with a $\m$, then $t=\m \a. \vec{\l \m}. (u \, \bar{u})$.

\begin{itemize}
\item If $u = (\l x.v \, w)$, then 

$(t \, y) \rd_l \m \a.\vec{\l \m}. (\l x.v[\a:=^*y] \, w[\a =^*y] \; \bar{u}[\a:=^*y]) \rd_l$

$\m \a. \vec{\l \m}. (v[\a:=^*y][x:=w[\a:=^*y]] \; \bar{u}[\a:=^*y]) = $

$\m \a. \vec{\l \m}. (v[x:=w][\a:=^*y] \; \bar{u}[\a:=^*y]) = t''$.

On the other hand, $t \rd_l \m \a. \vec{\l \m}. (v[x:=w] \; \bar{u}) = t'$ and $(t' \, y) \rd_l t''$,  
then  $(t' \, y)$ is normalizable, therefore, by induction hypothesis ($l$ decreases), $t'$ is normalizable, 
thus $t$ is also normalizable.

\item If $u = (\m \b.v \, w)$, then 

$(t \, y) \rd_l \m \a.\vec{\l \m}. (\m \b.v[\a:=^*y] \, w[\a:=^*y] \; \bar{u}[\a:=^*y]) \rd_l$

$\m \a. \vec{\l \m}. (\m \b.v[\a:=^*y][\b:=^*w[\a:=^*y]] \; \bar{u}[\a:=^*y]) = $

$\m \a. \vec{\l \m}. (\m \b.v[\b:=^*w][\a:=^*y] \; \bar{u}[\a:=^*y]) = t''$.

On the other hand, $t \rd_l \m \a. \vec{\l \m}. (\m \b.v[\b:=^*w] \; \bar{u}) = t'$ and $(t' \, y) \rd_l t''$,  
then  $(t' \, y)$ is normalizable, therefore, by induction hypothesis ($l$ decreases), $t'$ is normalizable, 
thus $t$ is also normalizable.

\item If $u = x$, then 

$(t \, y) \rd_l \m \a.\vec{\l \m}. (x \; u_1[\a:=^*y]\dots u_n[\a:=^*y])$.

The term $u_i[\a:=^*y]$ is normalizable and $(\m \a.u_i \, y) \rd_l \m \a. u_i[\a:=^*y]$, then $(\m \a.u_i \, y)$ 
is normalizable and, by induction hypothesis ($l$ does not decrease but $c$ decreases), 
$\m \a.u_i$  is normalizable, thus $u_i$ is also normalizable. We deduce that $t$ is also normalizable.

\item If $u = (\b \, u')$, then 

$(t \, y) \rd_l \m \a.\vec{\l \m}. ((\b \, u'[\a:=^*y]) \; u_1[\a:=^*y]\dots u_n[\a:=^*y])$. 

As in the previous case, we prove that the terms $u',u_1,\dots  ,u_n$ are normalizable, then  $t$ is also normalizable.

\item If $u = (\a \, u')$, then 

$(t \, y) \rd_l \m \a.\vec{\l \m}. ((\a \, (u'[\a:=^*y] \, y) ) \; u_1[\a:=^*y]\dots u_n[\a:=^*y])$. 

As in the previous case, we prove that the terms $(u' \,y)$,$u_1,\dots  ,u_n$ are normalizable, then, by induction hypothesis 
($l$ does not decrease but $c$ decreases),  $u',u_1,\dots  ,u_n$ are normalizable, then  $t$ is also normalizable.

\end{itemize}

\item If $t$ begins with $\l$, we do the same proof
\qedhere
\end{enumerate}
\end{proof} 

In the technical results that we prove in this section, a particular kind of redex appears 
(the argument of $\l$-abstraction or $\mu$-abstraction is a variable).
It is therefore helpful to understand what is happening by reducing these redexes.
%Lemma \ref{yredex} will not be used in other proofs but will explain what happens at this special reduction.
Lemmas \ref{yredex} and \ref{ytype2} will be used in the proof of the completeness theorem.

\begin{definition}
Let $y$ be a $\l$-variable. 
%\begin{enumerate}
%\item 
A $y$-redex is a $\b$-redex or a $\m$-redex having $y$ as an argument 
i.e. a term of the form $(\l x.t \, y)$ or $(\m \a. t \, y)$. 
%\item We write $u \rd_{\b y} v$ (resp.  $u \rd_{\m y} v$) if we reduce in $u$ only one $\b$ (resp. $\m$) $y$-redex.
%\item We denote by $\rd_y$ the union of $\rd_{\b y}$ and $\rd_{\m y}$.
%\item We denode by $\rd^+_y$ (resp. $\rd^+_{\b y}$, $\rd^+_{\m y}$) 
%the transitive closure of  $\rd_y$ (resp. $\rd_{\b y}$, $\rd_{\m y}$).
%\item We denode by $\red_y$ (resp. $\red_{\b y}$, $\red_{\m y}$) 
%the transitive and reflexive closure of  $\red_y$ (resp. $\rd_{\b y}$, $\rd_{\m y}$).
%\end{enumerate}
\end{definition}

%It will justify the result of Lemma \ref {} which will be used in the proof of completeness theorem.

\begin{lemma}\label{yredex}
Let $t$ be a normal term and $y$ a $\l$-variable. 
%such that $y \not\in FV(t)$. 
If $(t \, y) \rd^+t'$, then 
%$(t \, y) \rd^+_y t'$ and 
each redex in $t'$ is a $y$-redex preceded by a $\m$-variable.
\end{lemma}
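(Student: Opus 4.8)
The plan is to strengthen the statement and prove it by induction on the length of the reduction $(t\,y)\rdp t'$. The invariant I would carry on a term $s$ is: (i) every redex of $s$ is a $y$-redex; (ii) every $y$-redex of $s$ occurs immediately inside an application $(\b\,\cdot)$ with $\b$ a $\m$-variable; and (iii) whenever a $\m$-abstraction $\m\g.u$ occurs in $s$ \emph{actively} — where ``actively'' means that it occurs in the form $(\m\g.u\,y)$, or in the form $(\b\,\m\g.u)$ for a $\m$-variable $\b$ that is bound by a $\m$-abstraction which itself occurs actively — the body $u$ contains no redex occurring in the form $(\g\,\cdot)$. (This is a legitimate inductive definition, since the active $\m$-abstraction binding $\b$ strictly contains $\m\g.u$.) The conclusion of the lemma is just (i) and (ii) taken together.

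First I would observe that, $t$ being normal, $(t\,y)$ has at most one redex — namely $(t\,y)$ itself, which exists only when $t$ is an abstraction — so the first step is forced. If $t=\l x.t_0$ then $(t\,y)\rd t_0[x:=y]$, which is already normal because a variable-for-variable substitution introduces no redex, and the lemma is vacuous. If $t=\m\a.t_0$ then $(t\,y)\rd\m\a.t_0[\a:=^*y]$, and (i)--(iii) can be checked by hand, using that for a normal term $N$ the redexes of $N[\a:=^*y]$ are exactly the $(w[\a:=^*y]\,y)$ with $(\a\,w)$ a subterm of $N$ and $w$ an abstraction, each sitting inside $(\a\,(\cdot))$, and that every $\m$-abstraction of $\m\a.t_0[\a:=^*y]$ has the shape $\m\g.(u[\a:=^*y])$ with $u$ normal, whose redexes are therefore all preceded by $\a$, with $\a\neq\g$ for the active ones.

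For the inductive step assume $s$ satisfies (i)--(iii) and $s\rd s'$; by (i) the contracted redex $R$ is a $y$-redex and by (ii) it occurs as $(\d\,R)$. If $R=(\l x.a\,y)$, the step replaces $(\d\,R)$ by $(\d\,a[x:=y])$; this is a variable-for-variable substitution, which creates no redex, does not change which $\m$-variable precedes a given redex, and introduces no new abstraction or application, so (i)--(iii) are preserved routinely. If $R=(\m\g.u\,y)$, the step replaces $(\d\,R)$ by $(\d\,\m\g.u[\g:=^*y])$; now $\m\g.u$ occurs actively in $s$ (it is applied to $y$), so by (iii) $u$ has no redex in the form $(\g\,\cdot)$, whence applying $[\g:=^*y]$ to $u$ neither dislodges an existing $y$-redex of $u$ from its $\m$-application nor produces a redex outside one — the new redexes are precisely the $(w[\g:=^*y]\,y)$ at positions $(\g\,w)$ with $w$ an abstraction, each inside $(\g\,(\cdot))$. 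This yields (i) and (ii) for $s'$. For (iii): any $\m$-abstraction active in $s'$ and lying inside $\m\g.u[\g:=^*y]$ is the image under $[\g:=^*y]$ of one that was active in $s$ (either it was active there already, or it occurred as an argument of $\g$ in $u$ and hence was active in $s$ because $\m\g.u$ was), so the absence of a $(\cdot)$-redex under its bound variable transfers from $s$; the point needing care is that $\m\g.u[\g:=^*y]$ itself — whose body now \emph{does} contain redexes in the form $(\g\,\cdot)$ — is no longer active in $s'$, because it is no longer applied to $y$ and the $\m$-variable $\d$ under which it sits is not bound by an active $\m$-abstraction: if $\m\d.e$ bound $\d$ and occurred actively, then (iii) applied in $s$ would forbid a redex of the form $(\d\,\cdot)$ in $e$, contradicting $(\d\,R)\subseteq e$. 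The same reasoning applies verbatim when the contraction takes place inside the body of one or more enclosing $\m$-redexes.

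Putting this together, every $t'$ with $(t\,y)\rdp t'$ satisfies (i) and (ii), which is the lemma. The hard part is finding clause (iii) and getting the bookkeeping about ``active'' occurrences right: a naive induction on (i)--(ii) alone breaks down precisely because contracting $(\m\g.u\,y)$ when $u$ contains $(\g\,(\l z.b\,y))$ yields $(\g\,((\l z.b[\g:=^*y]\,y)\,y))$, whose $y$-redex $(\l z.b[\g:=^*y]\,y)$ is no longer preceded by a $\m$-variable — and (iii) is exactly the invariant that forbids such a body for any $\m\g.u$ that can still be applied to $y$.
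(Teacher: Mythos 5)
Your proof is correct, and it uses the same skeleton as the paper's: induction on the length of the reduction $(t\,y)\rdp t'$, with the first step treated separately because, $t$ being normal, the only possible redex of $(t\,y)$ is $(t\,y)$ itself. The genuine difference is your clause (iii). The paper carries only the invariant stated in the lemma (every redex is a $y$-redex immediately under a $\m$-variable), and in the $\m$-case just notes that the redexes created by $[\b:=^*y]$ are preceded by $\b$ and that the contractum is ``locked''; that invariant is not by itself inductively closed, since it is compatible with the body $u$ of the contracted redex $(\m\g.u\,y)$ already containing a redex of the form $(\g\,R)$, in which case the contraction leaves $R[\g:=^*y]$ in function position of an application, no longer preceded by a $\m$-variable --- exactly the scenario you flag, and one the paper's two-line argument does not address. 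Your ``active'' occurrences (with the well-founded recursion through enclosing binders) are precisely the strengthening needed to show that scenario unreachable: a $\m$-abstraction that can still be applied to $y$, directly or after a chain of enclosing $\m$-contractions, never has a redex under its own bound variable, and your remark that $\d$'s binder cannot be active because its body contains $(\d\,R)$ is the step that closes the loop. So you prove a stronger invariant that makes the induction genuinely go through, where the paper proves the literal statement with an argument that is silent on the one delicate case; your version buys completeness of the case analysis at the price of the extra bookkeeping. Two small points: in the base case $t=\l x.t_0$ the conclusion is not vacuous but holds because $t_0[x:=y]$ is normal (no redexes, and no further reducts); and in the $\b$-case the contractum $a[x:=y]$ may itself be a $\m$-abstraction newly sitting under $(\d\,\cdot)$, which is harmless for exactly the reason you give in the $\m$-case ($\d$'s binder is not active), so that case merits the same one-line remark rather than ``routinely''.
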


\begin{proof}
By induction the number $n$ of steps to reduce $(t \, y)$ to $t'$.
\begin{enumerate}
\item If $n = 1$, then $t$ is of the form $\l x.u$ or $\m \a.u$.
\begin{itemize}
\item If $t = \l x.u$, then $t' = u[x:=y]$ which is normal.
\item If $t = \m \a.u$, then $t' = \m \a.u[\a:=^*y]$. 
Since $u$ is normal, every new redex of $u[\a:=^*y]$ is of the form $(v \, y)$ preceded by $\a$.
\end{itemize}
\item If $(t \, y) \rd^{n+1}t'$, then $(t \, y) \rd^{n}t'' \rd t'$. By induction,  
%$(t \, y) \rd^+_y t''$ 
each redex in $t''$ is a $y$-redex preceded by a $\m$-variable. We examine the reduction $t'' \rd t'$.
\begin{itemize}
\item If the reduced $y$-redex of $t''$ is of the form $(\l x.u \, y)$, then 
%$t'' \rd_{\b y} t'$ and 
the redexes of $t'$ are the same in $t''$ except the redex that we contracted.
\item If the reduced $y$-redex of $t''$ is of the form $(\m \b.u \, y)$,  then 
%$t'' \rd_{\m y} t'$ and 
$y$ has evolved from being an argument to this redex, to become an argument for each subterm, of this redex, 
preceded by $\b$ ; so a new $y$-redex preceded by a $\m$-variable has been created and the initial redex 
could not get a new argument as it is locked by $\b$.
\qedhere
\end{itemize}
\end{enumerate}
\end{proof}

%\begin{lm}\label{yform}
%If $t$ is a normal term,  $y$ a $\l$-variable such that $y \not\in FV(t)$, $(t \, y)$ typable  
%and $(u \, y)$ a subterm of a reduce of $(t \, y)$, then $u$ must have one of the following forms 
%$\l x.u'$, $\m \a.u'$ or $(x \, \bar{u})$.
%\end{lm}

%\begin{proof}
%We have $u = \vec{\l \m}. (v)\bar{v}$ where $v$ is a variable or a redex or of the form $(\b \, v')$. 
%\begin{itemize}
%\item If $u$ begin with an abstraction or $v$ is a variable, then the result is true.
%\item If $u = (v)\bar{v}$ where $v$ is a redex, then, by Lemma \ref{yredex}, 
%$v$ must be proceeded by a  $\m$-variable which is impossible.
%\item If $u = ((\a \, v') \, \bar{v})$, then type of  $(\a \, v')$ is $\perp$ which is contradictory with the fact $(u \, y)$ is typable.
%\end{itemize}
%\end{proof}

%Lemmas \ref{ytype1} and \ref{ytype2} will allow us to rebuild the typing in the proof of the completeness theorem. 
%These results are quite technical and to prove them we have to generalize the needed results. For example, in 
%Lemma \ref{ytype1}, we need two substitutions $\s$ and $\d$ in order to prove Lemma \ref{ytype2}
%and also in Lemma \ref{ytype2}, we need a simultaneous substitution because the length of such a 
%substitution may increase (case (b)-1).

Lemma \ref{ytype2} will allow us to rebuild the typing in the proof of the completeness theorem. 
To prove it, we need the following lemma.

\begin{lemma}\label{ytype1}
Let $t$ be a term, $y$ a $\l$-variable and $\a$ a $\m$-variable.
If $\G, y : B \v {t[\a :=^* y]:A} ;$ $ \a : C, \D$, then $\G, y : B \v t : A ; \a: B \f C, \D$.
\end{lemma}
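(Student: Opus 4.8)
The plan is to prove this by induction on $c(t)$, mimicking the structure of the definition of the simultaneous substitution $t[\a:=^*y]$, and tracking how a typing derivation for $t[\a:=^*y]$ can be transformed into one for $t$. The key observation is that the substitution $[\a:=^*y]$ only affects subterms of the form $(\a\;w)$, replacing them by $(\a\;(w\;y))$; in the typing derivation, a premise $\G,y:B\v w[\a:=^*y]:B\f C;\a:C,\D$ used to type $(\a\;(w[\a:=^*y]\;y)):\perp$ via the $(\perp)$ rule (with $\a:C$) becomes, after induction, a derivation of $\G,y:B\v w:B\f C;\a:B\f C,\D$, from which we recover $\G,y:B\v(\a\;w):\perp;\a:B\f C,\D$ directly by the $(\perp)$ rule (now with $\a:B\f C$). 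So the induction hypothesis must be stated uniformly: for every $\G,B,A,C,\D$, if $\G,y:B\v t[\a:=^*y]:A;\a:C,\D$ then $\G,y:B\v t:A;\a:B\f C,\D$.

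First I would handle the base cases. If $t=x$ with $x\neq\a_j$-type variable — concretely if $t$ is a $\l$-variable $x$ — then $t[\a:=^*y]=x$ and the axiom rule gives the result with the context $\a:C$ replaced by $\a:B\f C$, which does not affect the axiom. If $t=(\a\;u)$, then $t[\a:=^*y]=(\a\;(u[\a:=^*y]\;y))$ and $A=\perp$; the last rule is $(\perp)$ with conclusion $(\a\;(u[\a:=^*y]\;y)):\perp;\a:C,\D$, whose premise is $\G,y:B\v(u[\a:=^*y]\;y):C;\a:C,\D$. Inverting the $(\f_e)$ rule on this premise gives $\G,y:B\v u[\a:=^*y]:D\f C;\a:C,\D$ and $\G,y:B\v y:D;\a:C,\D$; the latter forces $D=B$ by the axiom rule (here I use that $y$ has a unique type in the context). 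By the induction hypothesis applied to $u$, we get $\G,y:B\v u:B\f C;\a:B\f C,\D$, and applying $(\perp)$ with $\a:B\f C$ yields $\G,y:B\v(\a\;u):\perp;\a:B\f C,\D$.

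For the remaining inductive cases, the substitution commutes with the term constructors and does not touch $\a$, so the transformation is straightforward: if $t=(u\;v)$, apply the induction hypothesis to $u$ and $v$ and re-apply $(\f_e)$; if $t=\l x.u$, apply it to $u$ (with $x:A_1$ added to $\G$, choosing $x$ fresh) and re-apply $(\f_i)$; if $t=(\b\;u)$ with $\b\neq\a$, apply it to $u$ and re-apply $(\perp)$ on $\b$; if $t=\m\b.u$ with $\b$ fresh, apply it to $u$ (where $\b$ carries some type into the right context) and re-apply $(\m)$. In each case the only change to the sequent is $\a:C\rightsquigarrow\a:B\f C$ in the right-hand context, which is preserved under all these rules since none of them touches $\a$.

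The main obstacle I anticipate is the case $t=(\a\;u)$, specifically the need to invert the typing rules cleanly and to justify that the type $D$ of the argument $y$ must equal $B$. This relies on the fact that in the context $\G,y:B$ the variable $y$ can only be given the type $B$ (contexts assign a single type to each variable), which is implicit in the formulation of the axiom rule, together with a generation/inversion lemma for the application rule $(\f_e)$ and for $(\perp)$. One should also be slightly careful about variable freshness when $t=\m\b.u$ or $t=\l x.u$ to ensure the bound variable does not clash with $\a$ or $y$, but this is routine given the usual Barendregt convention already adopted in the paper for the definition of substitution.
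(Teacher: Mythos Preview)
Your proof is correct and follows essentially the same approach as the paper: both proceed by induction on $c(t)$, treat the constructors other than $(\a\;u)$ by commuting the substitution with the term former and invoking the induction hypothesis, and handle the key case $t=(\a\;u)$ by inverting $(\perp)$ and $(\to_e)$ to obtain $\G,y:B\vdash u[\a:=^*y]:B\to C;\a:C,\D$, then applying the induction hypothesis and $(\perp)$ with $\a:B\to C$. The paper leaves the inversion and the identification $D=B$ implicit, but otherwise the arguments coincide.
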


\begin{proof}
By induction on $c(t)$. 

\begin{enumerate}
\item If $t = x$, the result is trivial.
\item If $t = \l x.u$, then $A = E \f F$ and $\G, y : B , x : E \v u[\a :=^* y] : F ; \a : C, \D$.
By induction hypothesis, we have  $\G, y : B , x : E \v u: F ; \a : B \f C, \D$ and 
$\G, y : B \v t : A ; \a: B \f C, \D$.
\item If $t = (u \, v)$, then $\G, y : B \v u[\a :=^* y] : E \f A ; \a : C, \D$ and $\G, y : B \v v[\a :=^* y] : E ; \a : C, \D$.
 By induction hypothesis, we have  $\G, y : B \v u : E \f A ; \a : B \f C, \D$ and $\G, y : B \v v : E ; \a : B \f C, \D$, thus
$\G, y : B \v t : A ; \a: B \f C, \D$.
\item If $t = \m \b. u$, then $\G, y : B \v u[\a :=^* y] : \perp ; \a : C, \b : A , \D$. 
 By induction hypothesis, we have $\G, y : B \v u: \perp ; \a : B \f C, \b : A , \D$ and
$\G, y : B \v t : A ; \a: B \f C, \D$.
\item If $t = (\b \, u)$ and $\b \neq \a$, then $A = \perp$, $\D = \b : A , \D'$ and $\G, y : B \v u[\a :=^* y] : A ; \a : C, \b : A , \D'$.
 By induction hypothesis, we have $\G, y : B \v u: A ; \a : B \f C, \b : A , \D'$ and
$\G, y : B \v t : A ; \a: B \f C, \D$.
\item If $t = (\a \, u)$, then $A = \perp$ and $\G, y : B \v (\a \, (u[\a :=^* y] \, y)) : \perp ; \a : C , \D$, thus
 $\G, y : B \v (u[\a :=^* y] \, y) : C ; \a : C , \D$ and $\G, y : B \v u[\a :=^* y]: B \f C ; \a : C , \D$.
  By induction hypothesis, we have $\G, y : B \v u: B \f C ; \a : B \f C , \D$ and $\G, y : B \v t : A ; \a: B \f C, \D$.
  \qedhere
\end{enumerate}
\end{proof}

The next lemma is ``subject expansion'' restricted to $y$-redex.

\begin{lemma}\label{ytype2}
Let $t$ be a term,  $y$ a $\l$-variable and $t'$ a term obtained by reducing some $y$-redexes from $t$.  
If $\G, y : B \v t' : A ; \D$, then $\G , y : B \v t : A ; \D$.
\end{lemma}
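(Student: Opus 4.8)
The plan is to prove Lemma~\ref{ytype2} by induction on the number of $y$-redexes that are reduced in passing from $t$ to $t'$. The single-step case is the heart of the matter: it suffices to show that if $t'$ is obtained from $t$ by contracting \emph{one} $y$-redex and $\G, y:B \v t':A;\D$, then $\G, y:B \v t:A;\D$. The general statement then follows by iterating this, noting that a $y$-redex stays a $y$-redex (its argument $y$ is not affected by contracting another $y$-redex elsewhere, and $y$ does not get substituted away since it is a free variable kept in the context).

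For the one-step case, I would proceed by induction on $c(t)$ to locate the contracted redex, reducing to the situation where the $y$-redex is at the position under consideration. There are two shapes to handle. First, if the redex is $(\l x.u \, y)$, contracting it gives $u[x:=y]$; here I want to go from a typing of $u[x:=y]$ back to a typing of $(\l x.u\,y)$. This is exactly ordinary subject expansion for a $\b$-redex whose argument is a variable: from $\G,y:B \v u[x:=y]:A;\D$ one recovers $\G,y:B,x:B \v u:A;\D$ by a straightforward substitution-inversion lemma (a variable-for-variable substitution is invertible on typings), hence $\G,y:B \v \l x.u:B\f A;\D$ and then $\G,y:B \v (\l x.u\,y):A;\D$ by rule $(\to_e)$ using the axiom $\G,y:B \v y:B;\D$. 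Second, if the redex is $(\m \a.u \, y)$, contracting it gives $\m\a.u[\a:=^*y]$; from $\G,y:B \v \m\a.u[\a:=^*y]:A;\D$ we get (inverting rule $(\m)$, after renaming $\a$ fresh) $\G,y:B \v u[\a:=^*y]:\perp;\a:A,\D$, and now Lemma~\ref{ytype1} applies \emph{with} $C=\perp$ to yield $\G,y:B \v u:\perp;\a:B\f\perp,\D$; but we need $\a:A$ in the typing of $u$, so this forces $A = B\f\perp$. Hence from $\G,y:B \v u:\perp;\a:A,\D$ we obtain $\G,y:B \v \m\a.u:A;\D$ by rule $(\m)$ and then $\G,y:B \v (\m\a.u\,y):A;\D$? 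No --- $(\m\a.u\,y)$ has type obtained by applying a term of type $B\f\perp$ to $y:B$, giving $\perp$, not $A$. So the correct reading is: $A$ is the type $\m\a.u[\a:=^*y]$ receives, and the $\m$-rule tells us $\a:A$ appears; Lemma~\ref{ytype1} converts this to $\a:B\f A$ on $u$, and then re-typing $\m\a.u$ gives type $B\f A$, so that applying it to $y:B$ recovers type $A$ --- consistent. Thus $\G,y:B \v u:\perp;\a:B\f A,\D$, whence $\G,y:B \v \m\a.u:B\f A;\D$, and $\G,y:B \v (\m\a.u\,y):A;\D$.

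The main obstacle I anticipate is bookkeeping in the $\m$-redex case: correctly matching the indexed type $C$ in Lemma~\ref{ytype1} (which is stated with an arbitrary $C$, specializing here to whatever type $\a$ carries in the premise) and making sure the renaming of $\a$ to a fresh $\m$-variable before inverting rule $(\m)$ does not clash with names already present in $\D$ or in $t$. A secondary subtlety is that, to run the induction on $c(t)$ cleanly, I need to know that contracting a $y$-redex strictly inside a subterm commutes with the term constructors in the obvious way, and that in each constructor case (abstractions, applications, $\m$-applications $(\b\,u)$) the induction hypothesis applies to the strictly smaller immediate subterm where the redex lives; these are all routine once the two base shapes above are settled. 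I would also remark that the free variable $y:B$ must be carried in $\G$ throughout, since it is exactly the presence of $y$ in the context that makes the axiom $\G,y:B\v y:B;\D$ available to re-form the contracted redex.

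\begin{proof}
By induction on the number of $y$-redexes reduced to obtain $t'$ from $t$, it suffices to treat the case where $t'$ is obtained from $t$ by contracting a single $y$-redex. We argue by induction on $c(t)$ to reduce to the case where the contracted redex is the outermost one; the constructor cases ($t=\l x.u$, $t=(u\,v)$, $t=\m\b.u$, $t=(\b\,u)$ with the redex inside an immediate subterm) follow directly from the induction hypothesis applied to that subterm together with the relevant typing rule.

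Assume now the contracted redex is $t$ itself.

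\emph{Case $t=(\l x.u\,y)$, $t'=u[x:=y]$.} From $\G,y:B\v u[x:=y]:A;\D$, inverting the variable-for-variable substitution on the typing derivation (a routine inversion, by induction on the derivation), we obtain $\G,y:B,x:B\v u:A;\D$. Hence $\G,y:B\v \l x.u:B\f A;\D$ by $(\to_i)$, and since $\G,y:B\v y:B;\D$ by $(ax)$, rule $(\to_e)$ gives $\G,y:B\v(\l x.u\,y):A;\D$, i.e. $\G,y:B\v t:A;\D$.

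\emph{Case $t=(\m\a.u\,y)$, $t'=\m\a.u[\a:=^*y]$.} Renaming $\a$ to be a fresh $\m$-variable, inversion of rule $(\m)$ on $\G,y:B\v\m\a.u[\a:=^*y]:A;\D$ gives $\G,y:B\v u[\a:=^*y]:\perp;\a:A,\D$. By Lemma~\ref{ytype1} (with $C=A$), $\G,y:B\v u:\perp;\a:B\f A,\D$. Rule $(\m)$ then yields $\G,y:B\v\m\a.u:B\f A;\D$, and since $\G,y:B\v y:B;\D$, rule $(\to_e)$ gives $\G,y:B\v(\m\a.u\,y):A;\D$, i.e. $\G,y:B\v t:A;\D$.

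This completes the single-step case, and hence the lemma.
\end{proof}
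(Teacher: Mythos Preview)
Your proof is correct and follows essentially the same approach as the paper: reduce to a single $y$-redex contraction, proceed by induction on $c(t)$ through the constructor cases, and handle the two base redex shapes using a variable-substitution inversion for the $\beta$-case and Lemma~\ref{ytype1} for the $\mu$-case. The paper's case analysis is in fact slightly less complete than yours (it omits the constructor case $t=(\beta\,u)$), and your explicit outer induction on the number of reduced $y$-redexes makes precise what the paper abbreviates to ``it is enough to prove the result for a one reduction.''
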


\begin{proof}
It is enough to prove the result for a one reduction. By induction on $c(t)$.
\begin{enumerate}
\item If $t = \l x.u$ and $t' = \l x.u'$ with $u \red u'$, then $A = C \f D$ and $\G , y : B , x : C \v u' : D ; \D$.
By induction hypothesis, we have $\G , y : B , x : C \v u : D ; \D$ and $\G , y : B \v t : A ; \D$.
\item If $t = \m \a.u$  and $t' = \m \a. u'$ with $u \red u'$, then
$\G , y : B , x : C \v u' : \perp ; \a : A, \D$
By induction hypothesis, we have $\G , y : B , x : C \v u : \perp ; \a : A, \D$ and $\G , y : B \v t : A ; \D$.
\item If $t = (u \, v)$ and $t' = (u' \, v)$ with $u \red u'$, then $\G , y : B \v u' : C \f A ; \D$ and
$\G , y : B \v v : C ; \D$. By induction hypothesis, we have $\G , y : B \v u : C \f A ; \D$ and 
$\G , y : B \v t : A ; \D$.
\item If $t = (u \, v)$ and $t' = (u \, v')$ with $v \red v'$, we make the same proof of the previous case.
\item If $t = (\l x. u \, y)$ and $t' = u[x:=y]$, then $\G , y : B \v u[x:=y] : A ; \D$. Let $y'$ be a new $\l$-variable, then
$\G , y : B , y' : B \v u[x:=y'] : A ; \D$. We deduce
$\G , y : B \v \l y'.u[x:=y'] : B \f A ; \D$, thus 
 $\G , y : B \v (\l y'.u[x:=y'] \, y) : A ; \D$ and $\G , y : B \v t : A ; \D$.
\item If $t = (\m \a. u \, y)$ and $t' = \m \a. u[\a :=^* y]$, then $\G , y : B \v u[\a :=^* y] : \perp ; \a : A , \D$.
By Lemma \ref{ytype1}, we have $\G , y : B \v u : \perp ; \a : B \f A , \D$, then 
$\G , y : B \v \m \a.u : B \f A , \D$ and $\G , y : B \v t : A ; \D$.
\qedhere
\end{enumerate}
\end{proof}

\subsection{Completeness model}

We will now prove that if $t$ is in the general interpretation of a type $A$, then $t$ has the type $A$. 
For this, we will construct a particular term model $\mathbb{M}$ in which, we will get the equivalence between 
``having the type $A$'' and ``being within the type $A$'' (see Lemma \ref{fatiguant}). 
The construction of this model looks like the constructions of the completeness models of the papers 
\cite{hin1, hin2, hin3, hin4, lab, fn1, fn2, kam, ns3}. We start with enumerating infinite sets of variables
(which will be parameters of this model), then enumerating sets of types and finally fixing two 
infinite contexts (by associating enumerated types to enumerated variables) in which the terms will be typed. 
This will allow to define the bottoms and the variable sets associated to the models.
Note that in this completeness model we don't need the sets $(R_j)_{j\in J}$.

\begin{definition}\label{compl}\leavevmode
\begin{enumerate}
\item Let $\mathbb{V}_1 =\{x_i$ / $i\in \mathbb{N}\}$ (resp.  $\mathbb{V}_2 = \{\a_i$ /
$i \in \mathbb{N}\}$ be an enumeration of an infinite set of $\l$-variables (resp.
$\mu$-variables). We put $\mathbb{V} = \mathbb{V}_1 \bigcup \mathbb{V}_2$.

\item Let $\mathbb{T}_1=\{A_i$ / $i\in \mathbb{N}\}$ and $\mathbb{T}_2=\{B_i$ / $i\in \mathbb{N}\}$ 
be enumerations of all types.
%where each type comes infinitely many times.

%\item Let $\O_2= \{B_j$ / $j\in \mathbb{N}\}$ be an enumeration of all
%types where the type $\perp$ comes  infinitely  many times.

\item We define $\mathbb{G} =\{x_i : A_i$ / $i\in \mathbb{N}\} $ and $\mathbb{D}
=\{\a_i : B_i $ / $i\in \mathbb{N}\}$.

\item Let $u$ be a  term, such that $FV(u) \subseteq   \mathbb{V}$, the
 contexts $\mathbb{G}_u$ $($resp. $ \mathbb{D}_u$$)$ are defined as the
 restrictions of $\mathbb{G}$ $($resp. $\mathbb{D}$$)$ at the declarations
 containing the variables of  $FV(u)$.

\item The notation $\mathbb{G} \vdash u:C ;\,\,\mathbb{D}$ means that
$\mathbb{G}_u \vdash u:C ;\,\,\mathbb{D}_u $, we denote $\mathbb{G}
\vdash^* u:C ;\,\,\mathbb{D}$ iff there exists a term $u'$, such that
$u \red u'$ and $\mathbb{G} \vdash u':C ;\,\,\mathbb{D}$.

\item Let  $\mathbb{P} = \{ X_i$ / $i\in \mathbb{N}\}$ be an enumeration of 
$\{\perp\} \cup \mathcal{P}$. We assume that $X_0 = \perp$. 

\item For each $i \in \mathbb{N}$, 
let $\B_i = \{ t$ / $\mathbb{G} \vdash^* t: X_i ;\,\,\mathbb{D}\}$
and

$\mathbb{C}_i= \{\a \in \mathbb{V}_2 $ / $(\a  :  X_i) \in \mathbb{D}\}$.
\end{enumerate}
\end{definition}

\begin{lemma}
$\mathbb{M} =\<{(\mathbb{C}_i)}_{i \in \mathbb{N}},(\B_i)_{i \in\mathbb{N}},\emptyset\>$ is a model for $S_\m$.
\end{lemma}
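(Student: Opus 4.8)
The goal is to verify that the triple $\mathbb{M} = \langle (\mathbb{C}_i)_{i\in\mathbb{N}}, (\B_i)_{i\in\mathbb{N}}, \emptyset\rangle$ satisfies all the clauses in the definition of a model of $S_\m$. Since $J = \emptyset$, there is nothing to check concerning the $(R_j)_{j\in J}$. So the plan is to go through the remaining requirements one by one: (i) $0 \in \mathbb{N}$ is trivial and $X_0 = \perp$ is built into Definition \ref{compl}; (ii) the $\mathbb{C}_i$ are pairwise disjoint infinite sets of $\mu$-variables; (iii) each $\B_i$ is a non-empty saturated set of terms; (iv) the two ``bottom transfer'' conditions relating $\mathbb{C}_i$, $\B_i$ and $\B_0$.

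First I would handle the $\mathbb{C}_i$. Disjointness is immediate: if $\a \in \mathbb{C}_i \cap \mathbb{C}_k$ then $(\a : X_i)$ and $(\a : X_k)$ both lie in $\mathbb{D}$, but $\mathbb{D} = \{\a_i : B_i / i\in\mathbb{N}\}$ assigns exactly one type to each variable, so $X_i = X_k$, hence $i = k$ since $\mathbb{P}$ is an enumeration. Infiniteness of each $\mathbb{C}_i$ follows because $\mathbb{T}_2 = \{B_i / i\in\mathbb{N}\}$ enumerates \emph{all} types, so in particular $X_i$ occurs as $B_j$ for infinitely many $j$, and the corresponding $\a_j$ all belong to $\mathbb{C}_i$. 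Next, each $\B_i = \{t / \mathbb{G}\vdash^* t : X_i ; \mathbb{D}\}$ is non-empty: the axiom rule gives $\mathbb{G} \vdash x_j : A_j ; \mathbb{D}$ whenever $A_j = X_i$, and such $j$ exists because $\mathbb{T}_1$ enumerates all types; so that variable lies in $\B_i$. Saturation of $\B_i$ is where the definition of $\vdash^*$ does the work: if $v \red u$ and $u \in \B_i$, then $u \red u'$ with $\mathbb{G}\vdash u' : X_i ; \mathbb{D}$ (modulo the harmless bookkeeping that $\mathbb{G}_u$ versus $\mathbb{G}_v$ differ only by declarations for variables not free in the relevant term), and then $v \red u \red u'$ shows $v \in \B_i$; so $\B_i$ is saturated by definition of $\red$ being transitive.

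For the two transfer conditions, suppose $\a_i \in \mathbb{C}_i$, i.e. $(\a_i : X_i) \in \mathbb{D}$, and $u \in \B_0$, i.e. $\mathbb{G}\vdash^* u : \perp ; \mathbb{D}$, say $u \red u'$ with $\mathbb{G} \vdash u' : \perp ; \mathbb{D}$. Then the typing rule $(\m)$ applied with the declaration $\a_i : X_i$ (recall $X_0 = \perp$) yields $\mathbb{G} \vdash \m\a_i. u' : X_i ; \mathbb{D}$, and since $\m\a_i.u \red \m\a_i.u'$ we get $\m\a_i.u \in \B_i$ — this is exactly the first condition. For the second, if $\a_i \in \mathbb{C}_i$ and $u_i \in \B_i$, so $u_i \red u_i'$ with $\mathbb{G}\vdash u_i' : X_i ; \mathbb{D}$, then the rule $(\perp)$ (using that $(\a_i : X_i) \in \mathbb{D}$) gives $\mathbb{G} \vdash (\a_i\, u_i') : \perp ; \mathbb{D}$, and $(\a_i\, u_i) \red (\a_i\, u_i')$ shows $(\a_i\, u_i) \in \B_0$. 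I expect the only subtle point — the ``main obstacle'', such as it is — to be the careful matching of the ambient contexts $\mathbb{G}_u$, $\mathbb{D}_u$ with $\mathbb{G}$, $\mathbb{D}$ under reduction and under term formation (adding/removing declarations for variables that become or cease to be free); everything else is a direct application of an introduction or elimination rule. Once that bookkeeping is dispatched, all clauses hold and $\mathbb{M}$ is a model.
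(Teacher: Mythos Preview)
Your proof is correct and follows the same direct-verification approach as the paper, using the typing rules $(\mu)$ and $(\perp)$ to establish the two transfer conditions between $\B_0$ and $\B_i$. The paper's own proof is terser: it declares saturation of the $\B_i$ ``easy'' and writes out only the two transfer conditions, leaving disjointness and infiniteness of the $\mathbb{C}_i$ and non-emptiness of the $\B_i$ implicit, whereas you spell these out (note that your infiniteness argument tacitly assumes the enumeration $\mathbb{T}_2$ lists each type infinitely often, which the paper does not state explicitly but clearly intends).
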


\begin{proof} 
It is easy to show that $(\B_i)_{i \in \mathbb{N}}$ is a sequence of saturated subsets of terms.

\begin{itemize}
\item  $\forall \, i \in \mathbb{N}$, if $\a \in \mathbb{C}_i$ and $u \in \B_0$, 
then $(\a  :  X_i) \in \mathbb{D}$ and $\mathbb{G} \vdash^* u : \perp ;\,\,\mathbb{D}$, therefore
$\mathbb{G} \vdash^* \m \a.u : X_i ;\,\,\mathbb{D}$, thus $\m \a.u \in \B_i$.
\item $\forall \, i \in \mathbb{N}$, if $\a \in \mathbb{C}_i$ and $u \in \B_i$, 
then $(\a  :  X_i) \in \mathbb{D}$ and $\mathbb{G} \vdash^* u : X_i ;\,\,\mathbb{D}$, therefore
$\mathbb{G} \vdash^* (\a \, u) : \perp ;\,\,\mathbb{D}$, thus $(\a \, u) \in  \B_0$.
\qedhere
\end{itemize}
\end{proof}

Observe that the model $\mathbb{M}$ is parameterized by the two
infinite sets of variables and the enumerations. We need just these
infinite sets of variables and not all the variables. This is an
important remark since it will serve us in the proof of compleness theorem
(Theorem \ref{comp}).

\begin{definition}
We define the $\mathbb{M}$-interpretation $\mathbb{I}$ as follows
 $\forall \, i \in \mathbb{N}$,  $\mathbb{I}(X_i)= \B_i$.
\end{definition}

The following lemma is the generalized version of the completeness theorem. It states the equivalence between 
``a term $t$ is of type $A$ in the fixed contexts'' and ``a term $t$ belongs to the interpretation of the type $A$ 
in the model $\mathbb{M}$''. The proof is done by simultaneous induction and uses the technical lemmas from the beginning 
of this section.

\begin{lemma}\label{fatiguant}
Let $A$ be a type and $t$ a term.
\begin{enumerate}
\item If $\mathbb{G} \, \vdash^* t:A \; ;\mathbb{D}$, then $t \in \mathbb{I}(A)$.
\item If $t \in \mathbb{I}(A)$, then $\mathbb{G} \, \vdash^* t:A \; ; \mathbb{D}$.
\end{enumerate}
\end{lemma}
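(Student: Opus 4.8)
The plan is to prove the two statements simultaneously by induction on the complexity $c(A)$ of the type, and for each type, by a subsidiary analysis. The key observation making this work is the concrete description of $\mathbb{I}$: since $X_0 = \perp$, we have $\mathbb{I}(\perp) = \B_0 = \{t \mid \mathbb{G}\vdash^* t:\perp;\mathbb{D}\}$, and for a propositional variable $X_i$, $\mathbb{I}(X_i) = \B_i = \{t \mid \mathbb{G}\vdash^* t:X_i;\mathbb{D}\}$. So for atomic types, both directions (1) and (2) are literally the definition of $\B_i$; there is nothing to prove. For an arrow type $A \to B$ one has $\mathbb{I}(A\to B) = \mathbb{I}(A) \leadsto \mathbb{I}(B)$, and here the two directions genuinely diverge and require the technical lemmas.

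For direction (1) at an arrow type $A \to B$: assume $\mathbb{G}\vdash^* t:A\to B;\mathbb{D}$ and let $u\in\mathbb{I}(A)$; we must show $(t\,u)\in\mathbb{I}(B)$. The natural move is to reduce to the case $u = y$ a fresh $\l$-variable: pick $y$ not occurring anywhere relevant, with $(y:A)\in\mathbb{G}$ — this is legitimate because $\mathbb{G}$ declares each type infinitely often is \emph{not} quite true, but $\mathbb{T}_1$ enumerates \emph{all} types so some $x_i$ has $A_i = A$; to get freshness one uses the remark that $\mathbb{M}$ only needs the infinite variable sets, so one can always rename. Then $y\in\mathbb{I}(A)$ by the atomic/outer induction hypothesis (applied to $A$ with the derivation $\mathbb{G}\vdash y:A;\mathbb{D}$), hence by induction hypothesis (2) for the \emph{smaller} type $B$... wait — we actually want: from $\mathbb{G}\vdash^* t:A\to B;\mathbb{D}$ infer $\mathbb{G}\vdash^*(t\,y):B;\mathbb{D}$, which is immediate by $\to_e$, and then by IH(1) for $B$ we get $(t\,y)\in\mathbb{I}(B)$. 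For a \emph{general} $u\in\mathbb{I}(A)$ one needs to transport this: here is where Lemma~\ref{ytype2} (subject expansion restricted to $y$-redexes) and Lemma~\ref{yredex} come in — knowing $(t\,y)$ is typable/normalizing and that reducing the $y$-redex only creates $y$-redexes preceded by $\m$-variables lets one control $(t\,u)$. Honestly the cleanest route is: IH(1) for $B$ applied after showing $\mathbb{G}\vdash^*(t\,u):B;\mathbb{D}$ — but $u$ need not be typable in $\mathbb{G},\mathbb{D}$! So one must instead argue at the level of interpretations directly, using that $\mathbb{I}(B) = \mathbb{I}(B)^\perp \leadsto \B_{w(\mathbb{I}(B))}$ (Lemma~\ref{orth2}) and feeding sequences; I expect the argument to factor through $(t\,y)\in\mathbb{I}(B)$ plus a substitution/expansion lemma replacing $y$ by an arbitrary realizer.

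For direction (2) at an arrow type: assume $t\in\mathbb{I}(A)\leadsto\mathbb{I}(B)$. Take the fresh $y$ with $(y:A)\in\mathbb{G}$ as above; by IH(1) for $A$, $y\in\mathbb{I}(A)$, so $(t\,y)\in\mathbb{I}(B)$, whence by IH(2) for the smaller type $B$ we get $\mathbb{G}\vdash^*(t\,y):B;\mathbb{D}$, i.e. $(t\,y)\red t_1$ with $\mathbb{G}\vdash t_1:B;\mathbb{D}$. Now $y$ was fresh, so the only way $y$ entered the computation $(t\,y)\red t_1$ is through $y$-redexes; by Lemma~\ref{yredex} the residual structure is controlled, and by Lemma~\ref{ytype2} we can ``expand away'' the $y$-redexes to recover typability of $(t\,y)$ before those contractions, and ultimately — stripping the final argument $y$ and using Lemma~\ref{ynorm} to know $t$ itself is normalizing so that $t\red$ a suitable form — obtain $\mathbb{G}\vdash^* t:A\to B;\mathbb{D}$. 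The delicate point is precisely the interplay: $t$ reduced-with-its-argument might pass through a $\m$-abstraction that swallows $y$, so one cannot naively ``cancel'' $y$; Lemma~\ref{ynorm} guarantees $t$ has a normal form, Lemma~\ref{yredex} says the extra redexes in $(t\,y)$'s reducts are all $y$-redexes under $\m$'s, and Lemma~\ref{ytype2}/\ref{ytype1} let one push the type $A$ for $y$ back into a $\m$-binding as $B\to C$, reconstructing the arrow type for $t$.

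\textbf{Main obstacle.} I expect the hard part to be direction (2) for the arrow case: reconstructing $\mathbb{G}\vdash^* t : A\to B;\mathbb{D}$ from $\mathbb{G}\vdash^*(t\,y):B;\mathbb{D}$ when $t$ may reduce to a $\m$-abstraction that captures the argument $y$. This is exactly the scenario the preliminary lemmas (\ref{ynorm}, \ref{yredex}, \ref{ytype1}, \ref{ytype2}) were built to handle, and the proof there will require a careful case analysis on the head structure of $t$'s normal form, combining subject-reduction, the $y$-redex expansion lemma, and the substitution lemma for moving a hypothesis $y:B$ across a $\m$-binder into the type $B\to C$. Everything else — the atomic types, the arrow case of direction (1), the freshness bookkeeping — is comparatively routine.
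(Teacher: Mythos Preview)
Your treatment of direction~(2) at an arrow type is essentially the paper's proof: pick $y$ with $(y:A)\in\mathbb{G}$, use IH(1) on the domain to get $y\in\mathbb{I}(A)$, IH(2) on the codomain to get $\mathbb{G}\vdash^*(t\,y):B;\mathbb{D}$, then Lemma~\ref{ynorm} to normalize $t$, Lemma~\ref{yredex} on the normal form, and Lemma~\ref{ytype2} to expand the $y$-redexes and recover $\mathbb{G}\vdash t':A\to B;\mathbb{D}$. Your identification of this as the hard case and of which lemmas are needed is correct.

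The gap is in direction~(1) at an arrow type. You write ``$u$ need not be typable in $\mathbb{G},\mathbb{D}$!'' and then veer off into orthogonals and a would-be substitution lemma. This is a miss: the whole point of the \emph{simultaneous} induction is that IH(2) is available for \emph{both} immediate subtypes, not just the codomain. Given $u\in\mathbb{I}(A)$ with $c(A) < c(A\to B)$, IH(2) on $A$ yields $\mathbb{G}\vdash^* u:A;\mathbb{D}$, i.e.\ $u\red u'$ with $\mathbb{G}\vdash u':A;\mathbb{D}$. Since also $t\red t'$ with $\mathbb{G}\vdash t':A\to B;\mathbb{D}$, rule $\to_e$ gives $\mathbb{G}\vdash (t'\,u'):B;\mathbb{D}$, hence $\mathbb{G}\vdash^*(t\,u):B;\mathbb{D}$, and IH(1) on $B$ finishes. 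That is the paper's argument; direction~(1) is the \emph{easy} case and uses none of Lemmas~\ref{ynorm}--\ref{ytype2}, nor any fresh-variable trick, nor Lemma~\ref{orth2}. Your proposed detour through $(t\,y)$ plus ``replacing $y$ by an arbitrary realizer'' is, as stated, circular: the statement you would need is precisely $t\in\mathbb{I}(A)\leadsto\mathbb{I}(B)$.
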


\begin{proof} By  a simultaneous induction on $c(A)$.

\begin{enumerate}
\item

\begin{enumerate}
\item If $A=X$ or $\perp$, the result is immediate from the definition of $\mathbb{I}$.
\item If  $A=B \to C$, then $t \red
  t'$ such that: $\mathbb{G} \, \vdash t': B \to C \; ;
  \mathbb{D}$. Let $u \in \mathbb{I}(B)$. By  induction
  hypothesis $(2)$, we have $\mathbb{G} \, \vdash^* u:B \; ;
  \mathbb{D}$, this implies that $u \red u'$ and $\mathbb{G} \, \vdash
  u': B \; ; \mathbb{D}$. Hence $\mathbb{G} \, \vdash (t'\;u'): C \; ;
  \mathbb{D}$, so, by the fact that $(t\;u) \red (t'\;u')$, we
  have $\mathbb{G} \, \vdash^* (t\;u):C \; ; \mathbb{D}$, then, by
  induction hypothesis $(1)$, $(t\;u) \in \mathbb{I}(C)$. Therefore
  $t \in \mathbb{I}(B \to C)$.
\end{enumerate}

\item

\begin{enumerate}
\item If $A=X$ or $\perp$, the result is immediate from the definition of $\mathbb{I}$.
\item If $A=B \to C$ and $t \in \mathbb{I}(B) \leadsto \mathbb{I}(C)$, let $y$ be a $\l$-variable 
such 
%$y \not\in FV(t)$ and 
$(y:B) \in \mathbb{G}$. We have $y:B
\v y:B$, hence, by induction hypothesis
  $(1)$, $y \in \mathbb{I}(B)$, then, $(t\; y) \in
  \mathbb{I}(C)$. By induction hypothesis $(2)$, $\mathbb{G} \,
  \vdash^* (t\;y) :C \; ; \mathbb{D}$, then $(t\; y) \red u$ such
  that $\mathbb{G} \, \vdash u:C \; ; \mathbb{D}$ and, by the
 Lemma \ref{ynorm}, $t$ is a normalizable term. Let $t'$ (resp. $u'$) be the normal
form of $t$ (resp. of $u$). So $(t' \, y) \red u'$, by Lemma \ref{yredex}, $u'$ is obtained by reducing some $y$-redexes from 
$(t' \, y)$, then, by Lemma \ref{ytype2}, 
$\mathbb{G} \, \vdash (t' \, y) : C \; ; \mathbb{D}$, thus $\mathbb{G} \, \vdash t' :B \f C \; ; \mathbb{D}$ and
$\mathbb{G} \, \vdash^* t:B \f C \; ; \mathbb{D}$,
\qedhere
\end{enumerate}
\end{enumerate}
\end{proof}

Note that the item 1 of Lemma \ref{fatiguant} can not be deduced from the correctness lemma. 
This comes from the presence of contexts $\mathbb{G}$ and $\mathbb{D}$ to type a contractum of term $t$.

We can now state and prove the completeness theorem.

\begin{theorem}[Completeness theorem]\label{comp} 
Let $A$ be a type and $t$ a term. 
We have $t \in \vert A \vert$ iff
there exists a closed term $t'$ such that $t\red t'$ and $\v t':A$.
\end{theorem}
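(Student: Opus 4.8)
The plan is to prove the two directions of the equivalence separately, using the correctness lemma (Corollary \ref{closed}) for one direction and the completeness model $\mathbb{M}$ together with Lemma \ref{fatiguant} for the other.

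For the direction ``$(\Leftarrow)$'', suppose there is a closed term $t'$ with $t \red t'$ and $\vdash t':A$. By the correctness lemma (Corollary \ref{closed}), $t' \in |A|$. Since every set in $|{\cal M}|$ is saturated (by the saturation lemma for $\leadsto$ and the fact that the $\B_i$, $R_j$ are saturated), and $|A|$ is an intersection of such sets, $|A|$ is saturated; hence $t \red t'$ and $t' \in |A|$ give $t \in |A|$.

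For the harder direction ``$(\Rightarrow)$'', suppose $t \in |A|$. The key point is that $t$ need not be closed, whereas the completeness model $\mathbb{M}$ is built over the fixed infinite variable sets $\mathbb{V}_1, \mathbb{V}_2$. So first I would pick a renaming: since $t$ has only finitely many free variables, and $\mathbb{V}_1, \mathbb{V}_2$ are infinite, I can find a term $s$ obtained from $t$ by an injective renaming of free variables into $\mathbb{V}_1 \cup \mathbb{V}_2$ not already used, so that $FV(s) \subseteq \mathbb{V}$; renaming is harmless for typability and for membership in $|A|$ (the interpretation does not depend on variable names). Now $s \in |A| \subseteq |s|_{\mathbb{M}} \subseteq \mathbb{I}(A)$, so by item 2 of Lemma \ref{fatiguant} we get $\mathbb{G} \vdash^* s : A\,;\,\mathbb{D}$, i.e. there is $s'$ with $s \red s'$ and $\mathbb{G}_{s'} \vdash s' : A\,;\,\mathbb{D}_{s'}$. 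To finish I would abstract over all the free variables: closing $s'$ by $\lambda$-abstracting its free $\lambda$-variables and $\mu$-abstracting (via $\mu$-applications as in $\sou{\m}$) its free $\mu$-variables, using the typing rules $\to_i$, $(\mu)$ and $(\perp)$, yields a closed term; reducing the resulting redexes and applying the same closing construction to $t$ itself (which reduces to $s'$ up to the renaming) produces the required closed $t'$ with $t \red t'$ and $\vdash t':A$.

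The main obstacle is the bookkeeping in the last step: Lemma \ref{fatiguant} only gives typability in the \emph{fixed} contexts $\mathbb{G}, \mathbb{D}$, so one must carefully close the term over exactly the free variables occurring in $s'$, check that the closing abstractions are well-typed (this is where one uses that $\mathbb{G}, \mathbb{D}$ assign the ``right'' types to the relevant variables, together with $\to_i$ and the $(\mu)/(\perp)$ rules), and verify that the closure of $t$ indeed reduces to this closed term. The renaming argument at the start is the other delicate point, since one must ensure both that it preserves $t \in |A|$ and that the renamed term still reduces appropriately; both follow because the semantics and the reduction relation are invariant under injective renaming of variables.
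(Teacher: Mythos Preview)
Your ``$\Leftarrow$'' direction is fine and matches the paper.

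The ``$\Rightarrow$'' direction, however, has a genuine gap in the final ``closing'' step. Once Lemma~\ref{fatiguant}(2) gives you $\mathbb{G}_{s'}\vdash s':A\,;\,\mathbb{D}_{s'}$ with $s'$ possibly containing free variables from $\mathbb{V}$, abstracting those variables does \emph{not} yield a closed term of type $A$: $\lambda$-abstracting a free $x$ with $(x{:}B)\in\mathbb{G}$ turns the type into $B\to A$, and the $(\mu)/(\bot)$ rules likewise alter the type (you can only $\mu$-abstract over a term of type $\bot$). There are no ``resulting redexes'' to reduce after abstracting, so nothing brings the type back to $A$. Nor can you transport the typing back along the renaming to conclude anything about a closed reduct of $t$: undoing the renaming just gives a reduct of $t$ typed in some non-empty contexts. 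A secondary issue is the claim that ``renaming is harmless for membership in $|A|$'': this is not immediate, since the sets $\B_i$, $R_j$ in a model are arbitrary saturated sets of terms and need not be closed under renaming of free variables; it can be argued by transporting models along the renaming, but you do not do so.

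The paper avoids all of this by exploiting the remark after Definition~\ref{compl} that the completeness model $\mathbb{M}$ is \emph{parameterized} by the choice of the infinite variable sets $\mathbb{V}_1,\mathbb{V}_2$. Given $t$, one simply picks $\mathbb{V}$ disjoint from $FV(t)$ and builds $\mathbb{M}$ from that $\mathbb{V}$. Then $t\in|A|\subseteq\mathbb{I}(A)$, Lemma~\ref{fatiguant}(2) yields $t\red t'$ with $\mathbb{G}_{t'}\vdash t':A\,;\,\mathbb{D}_{t'}$, and since $FV(t')\subseteq FV(t)$ is disjoint from $\mathbb{V}$, the restricted contexts $\mathbb{G}_{t'}$ and $\mathbb{D}_{t'}$ are empty. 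Hence $\vdash t':A$ and $t'$ is closed, with no renaming or closing construction needed.
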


\begin{proof}\leavevmode
\begin{enumerate}[leftmargin=7mm]
\item[$\Leftarrow$:] 
%By Lemma \ref{adq}.
By Corollary \ref{closed}, $t' \in \vert A \vert$, then, $t \in \vert A \vert$ because $\vert A \vert$ is saturated.

\item[$\Rightarrow$:] We consider an infinite set of $\l$ and $\m$ variables $\mathbb{V}$ which 
contains none of the free variables of $t$, then from this set
we build the completeness model as described in Definition
\ref{compl}. If $t\in \vert A \vert$, then $t \in \mathbb{I}(A)$,
hence by $(2)$ of Lemma \ref{fatiguant} and by the fact that
$FV(t') \subseteq FV(t)$, we have $t \red t'$ and $\vdash t':A$.
\qedhere
\end{enumerate}
\end{proof}

Here are some direct and unexpected consequences of the completeness theorem. 

\begin{corollary}
 Let $A$  be a  type and $t$ a term.
\begin{enumerate}
\item If $t \in \vert A \vert$, then $t$ is normalizable.
\item If $t \in \vert A \vert$, then there exists a closed term $t'$
such that $t \eq t'$.
\item The set $\vert A \vert$ is closed under equivalence. 
\end{enumerate}
\end{corollary}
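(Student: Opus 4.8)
The plan is to prove the corollary by invoking the completeness theorem (Theorem \ref{comp}) together with the soundness/correctness results and basic properties of reduction. Each of the three items follows fairly mechanically once we extract the structural information provided by Theorem \ref{comp}.

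\medskip

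For item (1), suppose $t \in |A|$. By Theorem \ref{comp} there is a closed term $t'$ with $t \red t'$ and $\vdash t':A$. By the strong normalization theorem (Theorem \ref{SN}), $t'$ is strongly normalizable, hence in particular normalizable. Since $t \red t'$ and $t'$ has a normal form $t''$ (with $t' \red t''$), we get $t \red t''$ with $t''$ normal, so $t$ is normalizable. (If one prefers the stronger conclusion that $t$ itself is strongly normalizable, one can argue using confluence, Theorem 1: any reduct of $t$ reduces to a reduct of $t'$, which is SN; but the statement only asks for normalizable, so the simple argument suffices.)

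\medskip

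For item (2), again apply Theorem \ref{comp}: if $t \in |A|$ there is a closed term $t'$ with $t \red t'$ and $\vdash t':A$. Then $t \red t'$ gives $t \eq t'$ by definition of $\eq$ as the equivalence relation induced by $\red$, and $t'$ is the required closed term. For item (3), let $t \in |A|$ and suppose $t \eq s$; we must show $s \in |A|$. By Theorem \ref{comp}, $t \red t'$ with $t'$ closed and $\vdash t':A$. Since $t \eq s$ and $t \red t'$, we have $s \eq t'$. By the confluence theorem (Theorem 1, applied to $\eq$: any two $\eq$-equivalent terms have a common reduct), there is a term $r$ with $s \red r$ and $t' \red r$. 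By subject reduction (Theorem 2), $\vdash r:A$, and $r$ is closed since $FV(r) \subseteq FV(t') = \emptyset$. Thus $s \red r$ with $r$ closed and $\vdash r:A$, so by the $\Leftarrow$ direction of Theorem \ref{comp}, $s \in |A|$; hence $|A|$ is closed under $\eq$.

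\medskip

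The main (and only) subtlety is the passage from $\eq$-equivalence to a common reduct in item (3): one needs the Church–Rosser property in the form that $u \eq v$ implies $u$ and $v$ share a common reduct, which is the standard consequence of the confluence theorem stated as Theorem 1. Everything else is a direct appeal to Theorems \ref{SN}, 2, and \ref{comp}, plus the definitions of normalizability and of $\eq$. No new induction or combinatorial argument is required.
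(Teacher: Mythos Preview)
Your proof is correct and follows essentially the same route as the paper. The paper's own proof is extremely terse (``$(1)$ and $(2)$ are direct consequences of Theorems \ref{SN} and \ref{comp}; $(3)$ can be deduced from Theorem \ref{comp} and Lemma \ref{adq}''), and you have spelled out the details accurately: for item (3) you correctly identify that, beyond the two directions of Theorem \ref{comp}, one also needs confluence (to pass from $s \simeq t'$ to a common reduct) and subject reduction (to keep the type), which the paper leaves implicit; the paper's mention of Lemma \ref{adq} is just what underlies the $\Leftarrow$ direction of Theorem \ref{comp} via Corollary \ref{closed}.
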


\begin{proof} 
$(1)$ and $(2)$ are  direct consequences of Theorems \ref{SN} and \ref{comp}.  $(3)$
can be deduced from Theorem \ref{comp} and Lemma \ref{adq}.
\end{proof}

\section{Future work} 

Throughout this work we have seen that the propositional types of the
system $S_\m$ are complete for a realizability semantics. 
Two questions will be interesting to study.

\begin{enumerate}
\item The models that we have considered in this paper are sufficient to get correctness and completeness results: 
the saturation conditions that we have imposed in these models allow to have these two results. 
It will be interesting to understand more this kind of models: 
for example, build models with more bottoms to study the computational behaviour of some typed terms.

\item What about the types of the second order typed $\l\m$-calculus?
We know that, for the system ${\cal F}$, the $\forall^+$-types (types
with positive quantifiers) are complete for a realizability semantics
(see \cite{fn1, lab}). But for the classical system
${\cal{F}}$, we cannot easily generalize this result. 
%We check easily that,
%if $t=\m a.(a \, \l y_1 \l z \m b.(a \, \l y_2 \l x.z))$ and $A =
%\forall \, Y \{Y \to \forall X (X \to X)\}$, then $t \in |A|$, but $t$
%does not have the type $A$. This is due to the presence of $\forall$
%in right-hand-side of $\to$, hence, 
We think we need to add more restrictions
on the positions of $\forall$ in the $\forall^+$-types to obtain the
smallest class of types that we suppose can be proved to be complete.

%\item The problem is not the same when we consider the propositional
%classical natural deduction system with the connectives $\wedge$ and
%$\vee$. In previous works \cite{4NS2} and \cite{4NS1}, we define
%interpretations of $\wedge$ and $\vee$ according to the functional
%constructors $\curlywedge$ and $\curlyvee$ respectively as follows:

%\begin{itemize}
%\item ${\cal{K}} \curlywedge {\cal{L}} =\{t \in {\cal{T}}\,/\,
%(t\,\pi_1) \in {\cal{K}}$ and $(t\,\pi_2) \in {\cal{L}} \}$.

%\item ${\cal{K}} \curlyvee {\cal{L}} =\{t \in {\cal{T}}\,/\,$ for each
%$u, v$ if $($for each $r \in {\cal{K}}$, $s \in {\cal{L}}$ : $u[x:=r]
%\in {\cal{S}}$ and $v[y:=s] \in {\cal{S}}$$)$, then $(t\,[x.u,y.v])
%\in {\cal{S}}$$\}$.
%%where ${\cal{K}}$, ${\cal{L}}$ are sets of terms and ${\cal{S}}$ is a particular set of terms.
%\end{itemize}

%These interpretations allow to obtain a correctness result. We can
%easily check that the term $\m a.(a\; \<\m b.(a\;\<\l x.x,\m c. (b\;\l
%y.\l z.z)\>), \l x.x\> )$ belongs to the interpretation of the type $A
%= (X \to X) \wedge (X \to X)$ but it does not have the type $A$. The
%treatment of the disjunction is even a delicate matter, so we think that to
%circumventing this difficulties, and if we hope a completeness
%theorem, some deep modifications should be brought to our semantics.

\end{enumerate}
\smallskip

\subsection*{Acknowledgements}
We wish to thank P. Batty\'anyi, N. Bernard and the anonymous referees 
for the numerous corrections and suggestions.

\smallskip

\end{document}